\documentclass[a4paper]{article}
\usepackage[utf8]{inputenc}
\usepackage{amsthm,amsmath,amssymb,amsfonts}
\usepackage{tikz}
\usepackage{xcolor}
\usepackage{palatino}
\usepackage{hyperref}

\newtheorem{theorem}{Theorem}
\newtheorem{lemma}[theorem]{Lemma}
\newtheorem{proposition}[theorem]{Proposition}

\theoremstyle{definition}
\newtheorem{definition}{Definition}

\newcommand{\tuple}[1]{{\langle #1 \rangle}}
\newcommand{\ax}[1]{{\rm #1}}

\newcommand{\R}{{\sqsubseteq}}

\newcommand{\SF}{\mathsf{S4}}
\newcommand{\csf}{\mathsf{CS4}}
\newcommand{\isf}{\mathsf{IS4}}
\newcommand{\gsf}{\mathsf{GS4}}
\newcommand{\gsfc}{\mathsf{GS4^c}}

\newcommand{\mcsf}{{M_\infty^\csf}}
\newcommand{\mgsf}{{M_\infty^\gsf}}
\usepackage[citestyle=numeric, bibstyle=numeric, maxbibnames=5, date=year]{biblatex}
\bibliography{biblio}

\title{Modalities in non-classical variations of S4}
\author{Leonardo Pacheco}

\begin{document}
\maketitle

\begin{abstract}
    A classical result in modal logic states that $\mathsf{S4}$ has $14$ modalities, that is, every sequence of negations, boxes, and diamonds is equivalent to one in a set of $14$ such sequences.
    We study analogous results for the non-classical analogues $\mathsf{CS4}$, $\mathsf{IS4}$, $\mathsf{GS4}$, and $\mathsf{GS4^c}$ of $\mathsf{S4}$.
    First, we show that, while all these logics have finitely many $\{\Box,\Diamond\}$- and $\{\neg,\Box\}$-modalities, the logic $\mathsf{CS4}$ has infinitely many $\{\neg,\Diamond\}$-modalities.
    Second, we show that $\mathsf{IS4}$ and $\mathsf{GS4}$ have finitely many $\{\neg,\Diamond\}$-modalities, but they have infinitely many $\{\neg,\Box,\Diamond\}$-modalities.
    At last, we show that $\mathsf{GS4^c}$ has finitely many $\{\neg,\Box,\Diamond\}$-modalities.
\end{abstract}

%%%%%%%%%%%%%%%%%%%%%%%%%%%%%%%%%%%%%%%%%%%%%%%%%%%%%%%%%%%%%%%%%%%%%%%%%%%%%%%%
\section{Introduction}
%%%%%%%%%%%%%%%%%%%%%%%%%%%%%%%%%%%%%%%%%%%%%%%%%%%%%%%%%%%%%%%%%%%%%%%%%%%%%%%%

It is widely known that, over $\SF$, every sequence of negations, boxes, and diamonds is equivalent to one of:
\[
    \varepsilon, \Box, \Diamond, \Box\Diamond, \Diamond\Box, \Box\Diamond\Box, \Diamond\Box\Diamond, 
    \neg, \neg\Box, \neg\Diamond, \neg\Box\Diamond, \neg\Diamond\Box, \neg\Box\Diamond\Box \text{, and } \neg\Diamond\Box\Diamond.
\] 
This result was first proved by Parry~\cite{parry1939modalities} in 1939; it has been widely studied \cite{bellissima1989modalities,vanbenthem1976reduction,fitch1973reduction,chellas1980textbook} and is now a standard exercise in modal logic textbooks.
A topological analogue was also studied by Kuratowski \cite{kuratowski1922closure}; see also~\cite{mckinsey1944topology,gardner2008kuratowski}.

We prove (and refute) analogous results over the following non-classical variations of $\SF$:
\[
    \csf \subseteq \isf \subseteq \gsf \subseteq \gsfc.
\]
In particular, we prove that:
\begin{itemize}
    \item $\csf$ has finitely many $\{\Box,\Diamond\}$- and $\{\neg,\Box\}$-modalities but infinitely many $\{\neg,\Diamond\}$-modalities;
    \item $\isf$ and $\gsf$ have finitely many $\{\neg,\Diamond\}$-modalities but infinitely many $\{\neg,\Box, \Diamond\}$-modalities; and
    \item $\gsfc$ has finitely many $\{\neg,\Box,\Diamond\}$-modalities.
\end{itemize}
While interesting by themselves, these results also offer one explanation of why proving the finite model property for these logics is so hard: the proofs for $\csf$, $\gsf$, and $\gsfc$ use a relatively complicated filtration method~\cite{balbiani2021variants,balbiani2024variants}, and the published proof for $\isf$~\cite{girlando2023is4} contains a mistake~\cite{girlando2026is4mistake}.

\paragraph{Background.}
The logics we study belong to wider families of modal logics based on intuitionistic propositional logic and using both boxes and diamonds.
The logic $\csf$ was first studied by Alechina \emph{et al.}~\cite{alechina2001cs4} and is based on Wijesekera's constructive modal logic $\mathsf{CK}$~\cite{mendler2005constructive, wijesekera1990constructive}.
The intuitionistic modal logic $\isf$ was first studied by Fischer Servi \cite{servi1984axiomatizations} and later by Simpson\cite{simpson1994phdthesis}.
The logics $\gsf$ and $\gsfc$ are based on the Gödel--Dummett family of fuzzy logics; they were first studied by Caicedo and Rodríguez~\cite{caicedo2015bimodalgodellogic} and Rodríguez and Vidal~\cite{rodriguez2021crispgodellogic}.

As far as we are aware, there are no results in the literature about modalities for intuitionistic modal logics using both boxes and diamonds.
For logics using only boxes or only  diamonds, a few results are known.
Do\v sen~\cite{dosen1985stronger} and Font~\cite{font1986modality} proved that the $\Diamond$-free logic $\mathsf{iS4}$\footnote{$\mathsf{iS4}$ is called $\mathsf{HK4}_\Box$ by Do\v sen and $\mathsf{IM4}$ by Font.} has finitely many $\{\neg,\Diamond\}$-modalities.
Do\v sen~\cite{dosen1985stronger} also proved that the $\Box$-free logic $\mathsf{HS4}_\Diamond$ has infinitely many $\{\neg,\Diamond\}$-modalities.

Another related line of work can be found Ciraulo's paper~\cite{ciraulo2025constructivekuratowski}, which studies Kuratowski's topological problem in a constructive setting.
Do note that our situation is quite different from the one in that paper. 
From a metamathematical point of view, we work in a classical framework. 
From a topological point of view, the semantics for $\csf$, $\isf$, $\gsf$, and $\gsfc$ involve two topologies instead of one~\cite{aguilera2026polytopological}.

%%%%%%%%%%%%%%%%%%%%%%%%%%%%%%%%%%%%%%%%%%%%%%%%%%%%%%%%%%%%%%%%%%%%%%%%%%%%%%%%
\section{Preliminaries}
%%%%%%%%%%%%%%%%%%%%%%%%%%%%%%%%%%%%%%%%%%%%%%%%%%%%%%%%%%%%%%%%%%%%%%%%%%%%%%%%

\paragraph{Syntax.}
We will work within a standard language of intuitionistic modal logic, where, unlike classical modal logic, $\Box$ and $\Diamond$ are not interdefinable.
Fix a set $\mathrm{Prop}$ of propositional symbols.
The \emph{modal formulas} are defined by the following grammar:
\[
    \varphi := \bot \mid p\mid \varphi\land\varphi  \mid \varphi\lor\varphi \mid \varphi\to\varphi \mid \Box\varphi \mid \Diamond\varphi.
\]
As usual, we define $\neg\varphi:= \varphi\to\bot$ and $\varphi\leftrightarrow\psi := (\varphi \to \psi) \land (\psi\to\varphi)$.

\begin{definition}
    A \emph{(modal) logic} is a set of formulas closed under necessitation and \emph{modus ponens}:
        \[
            \ax{Nec}:=\frac{\varphi}{\Box\varphi} \text{ and } \ax{MP} :=\frac{\varphi \;\;\; \varphi\to\psi}{\psi}.
        \]
    We write $\Lambda\vdash\varphi$ for $\varphi\in\Lambda$.
    $\csf$ is the least logic containing all intuitionistic tautologies, and the axioms:
    \begin{itemize}
        \item ${\ax K} := (\Box(\varphi\to\psi) \to (\Box\varphi \to \Box \psi)) \land (\Box(\varphi\to\psi) \to (\Diamond\varphi \to \Diamond \psi))$;
        \item ${\ax 4} := (\Box\varphi \to \Box\Box\varphi) \land (\Diamond\Diamond\varphi \to \Diamond \varphi)$;
        \item ${\ax T} := (\Box\varphi \to \varphi) \land (\varphi \to \Diamond \varphi)$.
    \end{itemize}
\end{definition}

All other logics we consider are extensions of $\csf$, with combinations of the following axioms:
\begin{itemize}
    \item $\ax{N_\Diamond} := \neg\Diamond\bot$;
    \item $\ax{FS} := (\Diamond \varphi \to \Box\psi) \to \Box(\varphi\to\psi)$;
    \item $\ax{DP} := \Diamond (\varphi\lor\psi) \to \Diamond\varphi\lor\Diamond\psi$;
    \item $\ax{RV} := \Box(\varphi\lor\psi) \to \Box\varphi\lor\Diamond\psi$;
    \item $\ax{GD} := (\varphi\to\psi) \lor (\psi\to\varphi)$.
\end{itemize}
\noindent Above, \ax{FS} stands for {\em Fischer Servi,} \ax{DP} for {\em disjunctive possibility,} \ax{RV} for {\em Rodríguez--Vidal}, and \ax{GD} for {\em G\"odel--Dummett}.

\begin{definition}
    If $\Lambda$ is a logic and $\ax{X}$ is an axiom schema, then $\Lambda + \ax{X}$ is the least logic containing both $\Lambda$ and $\ax{X}$.
    We define the following extensions of $\csf$:
    \begin{itemize}
        \item $\isf  := \csf + \{\ax{N_\Diamond}, \ax{FS}, \ax{DP} \}$;
        \item $\gsf  := \isf + \{\ax{GD}\}$; and
        \item $\gsfc := \gsf + \{\ax{RV}\}$.
    \end{itemize}
    Above, $\sf c$ stands for `crisp' according to its use in fuzzy logic~\cite{rodriguez2021crispgodellogic}.
\end{definition}

\paragraph{Birelational semantics.}
Let $R$ be a relation over a set $W$.
We say $R$ is a \emph{preorder} iff $R$ is transitive and reflexive.
If $A\subseteq W$, then $A$ is an \emph{$R$-upset} iff $w\in A$ and $wRv$ implies $v\in A$; similarly $A$ is an \emph{$R$-downset} iff $w\in A$ and $vRw$ implies $v\in A$.
We say $A$ is an \emph{$R$-updownset} iff $A$ is both an $R$-upset and an $R$-downset.

\begin{definition}
    \label{def::csf-model}
    A birelational $\csf$-model is a tuple $M=\tuple{W, W^\bot, \preceq, \R, V}$, where:
    \begin{itemize}
        \item $W$ is the set of \emph{possible worlds};
        \item $W^\bot$ is the set of \emph{fallible worlds};
        \item the \emph{intuitionistic relation} $\preceq$ is a preorder over $W$;
        \item the \emph{modal relation} $\R$ is a preorder over $W$; and
        \item $V:\mathrm{Prop}\to \mathcal{P}(W)$ is a \emph{valuation function}.
    \end{itemize}
    We require that:
    \begin{itemize}
        \item $M$ is \emph{backward confluent}, that is, $w \R v \preceq v'$ implies there is $w'$ such that $w \preceq w'$ and $w' \R v'$;
        \item for all $P\in\mathrm{Prop}$, $V(P)$ is a $\preceq$-upset and $W^\bot\subseteq V(P)$;
        \item $W^\bot$ is both a $\preceq$-upset and a $\sqsubseteq$-upset.
    \end{itemize}
    Backward confluence is illustrated in Figure \ref{figure::confluences}.
\end{definition}

Fix a $\csf$-model $M = \tuple{W, W^\bot,\preceq, \R,V}$.
Define the valuation of the formulas over $M$ by induction on the structure of the formulas:
\begin{itemize}
    \item $M,w\models P$ iff  $w\in V(P)$;
    \item $M,w\models \bot$ iff $w\in W^\bot$;
    \item $M,w\models \varphi\land\psi$ iff $M,w\models\varphi$ and $M,w\models\psi$;
    \item $M,w\models \varphi\lor\psi$ iff $M,w\models\varphi$ or $M,w\models\psi$;
    \item $M,w\models \varphi\to\psi$ iff for all $v\in W$, if $w\preceq v$ and $M,v\models\varphi$, then $M,v\models\psi$;
    \item $M,w\models \Box\varphi$ iff for all $v,u\in W$, if $w\preceq v$ and $v\R u$, then $M,u\models\varphi$; and
    \item $M,w\models \Diamond\varphi$ iff for all $v\in W$, if $w\preceq v$ then there is $u$ such that $v \R u$ and $M,u\models\varphi$.
\end{itemize}
Given a $\csf$-model $M$ and a formula $\varphi$, we define \[\|\varphi\|^M := \{w\in W \mid M,w\models\varphi\}.\]

\begin{lemma}
    \label{lem::preservation}
    Let $M = \tuple{W, W^\bot,\preceq, \R,V}$ be a $\csf$-model and $\varphi$ be a formula.
    Then $\|\varphi\|^M$ is a $\preceq$-upset.
\end{lemma}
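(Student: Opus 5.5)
We argue by induction on the structure of $\varphi$: assuming $w\preceq w'$ and $M,w\models\varphi$, we show $M,w'\models\varphi$. Only transitivity of $\preceq$ and the upward closure conditions in Definition~\ref{def::csf-model} are needed. Backward confluence is not used; it matters for soundness of the axioms, not for persistence.

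The base cases follow directly from the definition. For $\varphi=P$, the set $V(P)$ is required to be a $\preceq$-upset. For $\varphi=\bot$, we have $\|\bot\|^M=W^\bot$, which is required to be a $\preceq$-upset. For conjunction and disjunction, the induction hypothesis applies to each component: an intersection or union of $\preceq$-upsets is again a $\preceq$-upset.

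The remaining three cases all follow the same pattern. Their clauses quantify universally over all $\preceq$-successors $v$ of the evaluation world, and transitivity of $\preceq$ turns every successor of $w'$ into a successor of $w$.
\begin{itemize}
    \item For $\varphi\to\psi$: if $w'\preceq v$ and $M,v\models\varphi$, then $w\preceq v$ by transitivity, so $M,v\models\psi$ because $M,w\models\varphi\to\psi$.
    \item For $\Box\varphi$: if $w'\preceq v\R u$, then $w\preceq v\R u$, so $M,u\models\varphi$.
    \item For $\Diamond\varphi$: if $w'\preceq v$, then $w\preceq v$, so there is $u$ with $v\R u$ and $M,u\models\varphi$.
\end{itemize}
None of these three cases even needs the induction hypothesis, because the subformula is always evaluated at the same world $u$ or $v$ in both directions.

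The only point requiring care is the modal clauses. A naive clause for $\Box$ (quantifying only over $\R$-successors of $w$) would make persistence fail or require a frame condition. Here, however, both $\Box$ and $\Diamond$ are defined with a prefixed universal quantification over $\preceq$-successors, so persistence follows uniformly from transitivity, and I expect no real obstacle.
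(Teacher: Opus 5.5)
Your proof is correct. The paper states this persistence lemma without proof, and your structural induction is the standard argument it leaves implicit: the induction hypothesis is needed only for $\land$ and $\lor$, and the $\to$, $\Box$, $\Diamond$ cases follow from transitivity of $\preceq$ alone, because each clause opens with a universal quantifier over $\preceq$-successors (and, as you note, backward confluence plays no role).
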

 
Models for the other logics we consider are obtained by enforcing various `confluence' properties.
\begin{definition}
    \label{def::birelational-properties}
    We say that a $\csf$-model $M = \tuple{W, W^\bot, \preceq, \R, V}$ is:
    \begin{itemize}
        \item \emph{forward confluent} iff $w \R v$ and $w\preceq w'$ implies there is $v'$ such that $w' \R v'$ and $v\preceq v'$;
        \item \emph{downward confluent} iff $w \preceq  v \R v'$ implies there is $w'$ such that $w \R w'$ and $w' \preceq v'$;
        \item \emph{locally linear} iff $w\preceq v$ and $w\preceq u$ implies that either $v\preceq u$ or $u\preceq v$.
    \end{itemize}
    Forward and downward confluence are illustrated in Figure \ref{figure::confluences}.
    \begin{figure}[ht]
        \centering
        \tikzstyle{world}=[circle,draw,minimum size=5mm,inner sep=0pt]
        \begin{tikzpicture}
            \node (w) at (0,-2) {$w$};
            \node (w2) at (0,0) {$w'$};
            \node (v) at (2,-2) {$v$};
            \node (v2) at (2,0) {$v'$};
        
            \draw[->] (w) -- (w2) node[midway,left] {$\preceq$};
            \draw[->] (w) -- (v) node[midway,below] {$\R$};
            \draw[dashed,->] (v) -- (v2) node[midway,right] {$\preceq$};
            \draw[dashed,->] (w2) -- (v2) node[midway,above] {$\R$};
        \end{tikzpicture}
        \begin{tikzpicture}
            \node (w) at (0,-2) {$w$};
            \node (w2) at (0,0) {$w'$};
            \node (v) at (2,-2) {$v$};
            \node (v2) at (2,0) {$v'$};
        
            \draw[dashed,->] (w) -- (w2) node[midway,left] {$\preceq$};
            \draw[->] (w) -- (v) node[midway,below] {$\R$};
            \draw[->] (v) -- (v2) node[midway,right] {$\preceq$};
            \draw[dashed,->] (w2) -- (v2) node[midway,above] {$\R$};
        \end{tikzpicture}
        \begin{tikzpicture}
            \node (w) at (0,-2) {$w$};
            \node (w2) at (0,0) {$w'$};
            \node (v) at (2,-2) {$v$};
            \node (v2) at (2,0) {$v'$};
        
            \draw[->] (w) -- (w2) node[midway,left] {$\preceq$};
            \draw[dashed,->] (w) -- (v) node[midway,below] {$\R$};
            \draw[dashed,->] (v) -- (v2) node[midway,right] {$\preceq$};
            \draw[->] (w2) -- (v2) node[midway,above] {$\R$};
        \end{tikzpicture}
        \caption{Schematics for forward (left), backward (center), and downward (right) confluence. Solid arrows correspond to the universal quantifiers and dashed arrows correspond to the existential quantifiers.}
        \label{figure::confluences}
    \end{figure}
\end{definition}

\begin{lemma}
    \label{lem::classical-valuations}
    Let $M = \tuple{W, W^\bot, \preceq, \R, V}$ be a $\csf$-model.
    Then:
    \begin{itemize}
        \item if $M$ is forward confluent, then $M, w\models \Diamond\varphi$ iff there is $v\in W$ such that $w \R v$ and $M,v\models\varphi$;
        \item if $M$ is downward confluent, then $M,w\models\Box\varphi$ iff for all $v\in W$, if $w \R v$, then $M,v\models\varphi$.
    \end{itemize}
\end{lemma}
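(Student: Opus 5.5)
Both items follow the same pattern: one direction is immediate from the semantic clause by reflexivity of $\preceq$, and the other uses the confluence property together with Lemma~\ref{lem::preservation}.

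\emph{Forward confluence and $\Diamond$.} Suppose $M,w\models\Diamond\varphi$. Since $w\preceq w$, the clause for $\Diamond$ applied to $v=w$ gives some $u$ with $w\R u$ and $M,u\models\varphi$.

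Conversely, suppose $w\R v$ and $M,v\models\varphi$, and let $w\preceq w'$ be arbitrary. Forward confluence gives $v'$ with $w'\R v'$ and $v\preceq v'$. By Lemma~\ref{lem::preservation}, $\|\varphi\|^M$ is a $\preceq$-upset, so $M,v'\models\varphi$. Thus every $\preceq$-successor $w'$ of $w$ has an $\R$-successor satisfying $\varphi$, which is exactly $M,w\models\Diamond\varphi$.

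\emph{Downward confluence and $\Box$.} Suppose $M,w\models\Box\varphi$ and $w\R v$. Taking $w\preceq w$ in the clause for $\Box$ gives $M,v\models\varphi$.

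Conversely, suppose every $\R$-successor of $w$ satisfies $\varphi$, and let $w\preceq v\R u$. Downward confluence, applied to $w\preceq v\R u$, gives $w''$ with $w\R w''$ and $w''\preceq u$. By hypothesis $M,w''\models\varphi$, and upward closure of $\|\varphi\|^M$ gives $M,u\models\varphi$. Hence $M,w\models\Box\varphi$.

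The only step needing care is matching each confluence property to the right quantifier pattern. In the $\Box$ case, downward confluence must be used to pull the world $u$ back below to an $\R$-successor of $w$ itself, so that the hypothesis applies and monotonicity finishes the argument. No further obstacle is expected.
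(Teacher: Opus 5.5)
Your proof is correct. The easy directions use reflexivity of $\preceq$, and the converse directions use forward (respectively downward) confluence followed by upward persistence from Lemma~\ref{lem::preservation}, which is exactly what is needed. The paper states this lemma without proof as a standard fact, and your argument is the routine verification it leaves implicit (and later relies on tacitly in the proof of Lemma~\ref{lem::updownset-box-diamond}).
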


\begin{definition}
    Let $M = \tuple{W, W^\bot, \preceq, \R, V}$ be a birelational $\csf$-model.
    Then, $M$ is:
    \begin{itemize}
        \item an \emph{$\isf$-model} iff it is forward and backward confluent;
        \item a \emph{$\gsf$-model} iff it is a locally linear $\isf$-model; and
        \item a \emph{$\gsfc$-model} iff it is a downward confluent $\gsf$-model.
    \end{itemize}
\end{definition}

Soundness and completeness for the four logics we consider are known~\cite{alechina2001cs4,balbiani2024variants,servi1984axiomatizations,simpson1994phdthesis}:
\begin{theorem}
    \label{thm::completeness-nonclassical}
    Let $\Lambda \in \{\csf, \isf, \gsf, \gsfc\}$. 
    Then $\Lambda$ is sound and strongly complete with respect to the class of birelational $\Lambda$-models.
\end{theorem}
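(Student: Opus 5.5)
Since the theorem is quoted from the literature~\cite{alechina2001cs4,balbiani2024variants,servi1984axiomatizations,simpson1994phdthesis}, the plan is to reconstruct the standard argument: soundness by a semantic check, strong completeness by a canonical model built from prime theories. I treat the four logics uniformly, adding one frame condition per extra axiom.

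\emph{Soundness.} I will show that every theorem of $\Lambda$ is valid on every birelational $\Lambda$-model. This is an induction on derivations, using Lemma~\ref{lem::preservation} for the intuitionistic tautologies and for \ax{MP}.
\begin{itemize}
\item \ax{Nec} preserves validity trivially.
\item \ax K follows from transitivity of $\preceq$.
\item \ax T follows from reflexivity of $\R$ and $\preceq$.
\item \ax 4 follows from transitivity of $\R$ and $\preceq$. For the $\Diamond$ half one uses that $\|\varphi\|$ is an upset.
\item For $\isf$, \ax{FS} and \ax{DP} follow from forward confluence via Lemma~\ref{lem::classical-valuations}.
\item \ax{N_\Diamond} holds because a world $w\models\Diamond\bot$ reaches a fallible world through $\R$. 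Since $W^\bot$ is a $\R$-upset, forward confluence lets us transport this back to $w$. The intended $\isf$-models need $W^\bot=\emptyset$ (or this closure) to be used here, and I would state that convention explicitly.
\item \ax{GD} follows from local linearity.
\item \ax{RV} follows from downward confluence through the classical reading of $\Box$ in Lemma~\ref{lem::classical-valuations}.
\end{itemize}

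\emph{Completeness.} I will build the canonical model $M^\Lambda$.
\begin{itemize}
\item Its worlds are the prime $\Lambda$-theories, meaning sets closed under $\vdash_\Lambda$ with the disjunction property. For $\csf$ the inconsistent theory (all formulas) is included as the unique fallible world; for the other logics it is used only if needed.
\item Put $\Gamma\preceq\Delta$ iff $\Gamma\subseteq\Delta$.
\item Put $\Gamma\R\Delta$ iff $\Gamma^\Box:=\{\varphi:\Box\varphi\in\Gamma\}\subseteq\Delta$ and $\Diamond\Delta:=\{\Diamond\varphi:\varphi\in\Delta\}\subseteq\Gamma$.
\end{itemize}
Reflexivity and transitivity of $\R$ come from \ax T and \ax 4. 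The strong form of completeness follows once a prime extension lemma is in place: any set $\Gamma$ with $\Gamma\nvdash\varphi$ extends to a prime theory omitting $\varphi$. This is Lindenbaum's argument using the deduction theorem, which holds because \ax{Nec} is applied only to theorems.

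The truth lemma is proved by induction on formulas, and the propositional cases are routine. For the $\Box$ case, if $\Box\varphi\notin\Gamma$, I will find $\Gamma\subseteq\Gamma'\R\Delta$ with $\varphi\notin\Delta$. For the $\Diamond$ case, if $\Diamond\varphi\notin\Gamma$, I will find $\Gamma'\supseteq\Gamma$ with no $\R$-successor containing $\varphi$. In $\csf$, the lack of \ax{DP} means a single theory is not enough here, so following Alechina et al.\ I would choose $\Gamma'$ maximal with respect to omitting the $\Diamond$-formulas outside $\Gamma$.

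Finally I verify the frame conditions, and this is where each extra axiom is used.
\begin{itemize}
\item Backward confluence: given $\Gamma\R\Delta\subseteq\Delta'$, I extend $\Gamma\cup\{\Diamond\psi:\psi\in\Delta'\}$ to a prime theory $\Gamma'$ with $\Gamma'^\Box\subseteq\Delta'$.
\item Forward confluence, for $\isf$ and above: given $\Gamma\R\Delta$ and $\Gamma\subseteq\Gamma'$, I extend $\Delta\cup\Gamma'^\Box$ to a prime theory $\Delta'$ avoiding all $\psi$ with $\Diamond\psi\notin\Gamma'$. Consistency of this extension uses \ax{FS}, and primeness of the omitted set uses \ax{DP}.
\item Local linearity, for $\gsf$: two prime extensions of a \ax{GD}-theory are comparable by inclusion, since \ax{GD} puts $\varphi\to\psi$ or $\psi\to\varphi$ into the common base.
\item Downward confluence, for $\gsfc$: given $\Gamma\subseteq\Delta\R\Delta'$, I find $\Gamma\R\Gamma'\subseteq\Delta'$ by extending $\Gamma^\Box$ to a prime theory inside $\Delta'$ that also respects the $\Diamond$-clause. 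Here \ax{RV} is exactly what lets a disjunction under $\Box$ be split against the $\Diamond$-formulas.
\end{itemize}

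I expect the main obstacle to be the existence arguments of the last step, especially the downward confluence for $\gsfc$ and the $\Diamond$ case of the truth lemma for $\csf$. In both, the required prime theory must satisfy simultaneous inclusion and omission constraints tied to two different theories. My first attempt will be a Zorn-style maximal omission argument, checking consistency of each finite stage with the relevant axiom (\ax{RV}, resp.\ \ax{K} for $\Diamond$).
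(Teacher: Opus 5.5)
The paper gives no proof of this theorem; it imports it from the cited literature, so your reconstruction has to stand on its own. For $\isf$ it is the standard Fischer Servi--Simpson canonical model and it works. Your extra steps for $\gsf$ and $\gsfc$ are also right: for downward confluence, if $\Gamma^\Box\vdash\chi\lor\theta$ with $\chi\notin\Delta'$ and $\Diamond\theta\notin\Gamma$, then $\ax{RV}$ gives $\Box\chi\in\Gamma\subseteq\Delta$, so $\chi\in\Delta'$.

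\textbf{The genuine gap is $\csf$.} With all prime theories as worlds and $\Gamma\R\Delta$ iff $\Gamma^\Box\subseteq\Delta$ and $\Diamond\Delta\subseteq\Gamma$, the truth lemma fails, and you have put the difficulty on the wrong side. The direction you call hard is trivial: $\Gamma'=\Gamma$ works, since every successor $\Delta$ has $\Diamond\Delta\subseteq\Gamma$. What fails is the existence of witnesses. Since $\csf\nvdash\ax{DP}$, some prime theory $\Gamma$ contains $\Diamond(p\lor q)$ but neither $\Diamond p$ nor $\Diamond q$. Any prime $\Delta$ with $\Gamma\R\Delta$ and $p\lor q\in\Delta$ would force $\Diamond p$ or $\Diamond q$ into $\Gamma$, so $\Gamma\not\models\Diamond(p\lor q)$.

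Likewise, your $\Box$ case and your backward-confluence step must turn $\Gamma\vdash\Diamond\psi\to\Box\chi$ into $\Box(\psi\to\chi)\in\Gamma$, which is exactly $\ax{FS}$. Concretely, take the $\csf$-model with worlds $x,x',u$, $W^\bot=\emptyset$, $x\preceq x'$ and $x\R u$ (plus reflexivity), $V(p)=\{u\}$, $V(q)=\emptyset$. The theory of $x$ contains $\Diamond p\to\Box q$ and $\Box(p\lor(p\to q))$ but not $\Box(p\to q)$. Yet every $\R$-successor of every extension of it contains $p\to q$.

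\textbf{A canonical structure that works for $\csf$.} Take worlds to be pairs $(\Gamma,\Phi)$, where $\Gamma$ is prime and $\Phi$ is closed under disjunction with $\Diamond\Phi\cap\Gamma=\emptyset$. Set $(\Gamma,\Phi)\preceq(\Gamma',\Phi')$ iff $\Gamma\subseteq\Gamma'$, and $(\Gamma,\Phi)\R(\Delta,\Psi)$ iff $\Gamma^\Box\subseteq\Delta$ and $\Phi\subseteq\Psi$. Then:
\begin{itemize}
\item If $\Diamond\varphi\in\Gamma'$, a witness at $(\Gamma',\Phi')$ is $(\Delta,\Phi')$, where $\Delta$ is a prime extension of $\Gamma'^\Box\cup\{\varphi\}$ avoiding $\Diamond\Phi'$. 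This uses only $\ax{K}$ and $\ax{4}$, not $\ax{DP}$.
\item If $\Diamond\varphi\notin\Gamma$, the extension $(\Gamma,\{\psi\mid\csf\vdash\psi\to\varphi\})$ has no $\R$-successor containing $\varphi$.
\item The $\Box$ case and backward confluence are witnessed with $\Phi=\emptyset$.
\end{itemize}

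\textbf{Soundness also needs repair.}
\begin{itemize}
\item The $\Box$ half of $\ax{4}$ and $\ax{FS}$ both use backward confluence, which you never invoke.
\item Canonical forward confluence uses $\ax{K}$ and $\ax{DP}$, not $\ax{FS}$.
\item Your argument for $\ax{N_\Diamond}$ is invalid: $W^\bot$ being an $\R$-upset cannot move fallibility back to $w$.
\end{itemize}
In fact, with the definitions as printed, the model $W=\{w,u\}$, $W^\bot=\{u\}$, $\preceq$ the identity and $w\R u$ is forward, backward and downward confluent and locally linear. Yet it refutes $\neg\Diamond\bot$ at $w$. So the convention you mention ($W^\bot=\emptyset$, or $W^\bot$ also an $\R$-downset) is not cosmetic: without it, soundness fails for $\isf$, $\gsf$ and $\gsfc$.
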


\paragraph{Modalities.}
Let $X\subseteq\{\neg,\Box,\Diamond\}$.
An \emph{$X$-modality} is a finite sequence $\sigma \in X^{<\omega}$.
We denote the empty $X$-modality by $\varepsilon$.
Fix $X$-modalities $\sigma$ and $\tau$.
We denote the concatenation of $\sigma$ and $\tau$ by $\sigma\tau$.
We define inductively $\sigma^0 :=\varepsilon$ and $\sigma^{n+1} := \sigma\sigma^n$.
If $\varphi$ is a formula, $\sigma\varphi$ denotes the formula obtained by appending the modality $\sigma$ to $\varphi$.

We say $\sigma$ and $\tau$ are \emph{$\Lambda$-equivalent} if, for all $\Lambda$-model $M$ and for all formula $\varphi$, $\|\sigma\varphi\|^M = \|\tau\varphi\|^M$.
We say $\Lambda$ has \emph{finitely many $X$-modalities} iff there is a finite set of modalities $\Sigma\subseteq X^{<\omega}$ such that every $X$-modality $\tau$ is $\Lambda$-equivalent to some $\sigma\in \Sigma$.
We say $\Lambda$ has \emph{infinitely many $X$-modalities} iff $\Lambda$ does not have finitely many $X$-modalities.

\begin{proposition}
    \label{lem::transfer}
    Let $\Lambda,\Lambda'\in\{\csf,\isf,\gsf,\gsfc\}$ be such that $\Lambda\subseteq \Lambda'$ and fix $X\subseteq\{\neg,\Box,\Diamond\}$.
    If $\Lambda$ has finitely many $X$-modalities, then $\Lambda'$ also has finitely many $X$-modalities.
    Conversely, if $\Lambda'$ has infinitely many $X$-modalities, then $\Lambda$ also has infinitely many $X$-modalities.
\end{proposition}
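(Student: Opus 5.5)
The plan is to use the semantic definition of $\Lambda$-equivalence together with the inclusion of model classes. The key observation is that $\Lambda\subseteq\Lambda'$ (for the four logics in question) corresponds to the reverse inclusion of model classes: every $\Lambda'$-model is a $\Lambda$-model. This is immediate from the definitions. A $\gsfc$-model is by definition a downward confluent $\gsf$-model, a $\gsf$-model is a locally linear $\isf$-model, and an $\isf$-model is a forward and backward confluent $\csf$-model. Since the chain $\csf\subseteq\isf\subseteq\gsf\subseteq\gsfc$ is linear and the model classes are defined by successively adding conditions, every $\Lambda'$-model is a $\Lambda$-model whenever $\Lambda\subseteq\Lambda'$. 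One must also note that $\Lambda\subseteq\Lambda'$ among these four logics forces $\Lambda$ to precede $\Lambda'$ in the chain, or to coincide with it. If some unexpected equality held between distinct names, the soundness and completeness result (Theorem~\ref{thm::completeness-nonclassical}) would still let one argue semantically: $\Lambda\subseteq\Lambda'$ gives that every $\Lambda'$-model validates $\Lambda$. This alone, however, does not make it a $\Lambda$-model in the frame sense, so I would simply rely on the syntactic chain.

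Next I would show that $\Lambda$-equivalence implies $\Lambda'$-equivalence. Suppose $\sigma$ and $\tau$ are $\Lambda$-equivalent. Then $\|\sigma\varphi\|^M=\|\tau\varphi\|^M$ for every $\Lambda$-model $M$ and every formula $\varphi$. In particular this holds for every $\Lambda'$-model $M$, since such an $M$ is a $\Lambda$-model. Hence $\sigma$ and $\tau$ are $\Lambda'$-equivalent.

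The first claim now follows. If $\Sigma\subseteq X^{<\omega}$ is a finite set witnessing that $\Lambda$ has finitely many $X$-modalities, then every $X$-modality $\tau$ is $\Lambda$-equivalent, hence $\Lambda'$-equivalent, to some $\sigma\in\Sigma$. So the same $\Sigma$ witnesses that $\Lambda'$ has finitely many $X$-modalities. The converse statement is just the contrapositive of the first. There is no real obstacle here. The only point requiring care is the first step: the model-class inclusion must be read off from the definitions rather than inferred from the inclusion of logics.
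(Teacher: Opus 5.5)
The paper states this proposition without proof, so there is no argument to match against. Your core argument is correct for pairs taken in the order $\csf\subseteq\isf\subseteq\gsf\subseteq\gsfc$, which is clearly the intended reading. That argument runs: every $\Lambda'$-model is a $\Lambda$-model, so $\Lambda$-equivalence implies $\Lambda'$-equivalence, the same finite $\Sigma$ works, and the second claim is the contrapositive.

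The soft spot is the case you raised yourself. Read literally, the hypothesis $\Lambda\subseteq\Lambda'$ is inclusion of sets of formulas. A pair with $\Lambda$ strictly later in the chain is then excluded only if the four logics are pairwise distinct. ``Relying on the syntactic chain'' does not establish that, since the chain is compatible with equalities. Proving distinctness would need soundness plus countermodels to $\ax{N_\Diamond}$, $\ax{GD}$ and $\ax{RV}$.

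Your reason for discarding the completeness route is mistaken: you never need a $\Lambda'$-model to be a $\Lambda$-model.
\begin{itemize}
\item If $\sigma,\tau$ are $\Lambda$-equivalent, then $\sigma\varphi\leftrightarrow\tau\varphi$ holds at every world of every $\Lambda$-model.
\item So $\Lambda\vdash\sigma\varphi\leftrightarrow\tau\varphi$ by Theorem~\ref{thm::completeness-nonclassical}, and hence $\Lambda'\vdash\sigma\varphi\leftrightarrow\tau\varphi$.
\item Soundness of $\Lambda'$, together with reflexivity of $\preceq$, then gives $\|\sigma\varphi\|^M=\|\tau\varphi\|^M$ for every $\Lambda'$-model $M$.
\end{itemize}
This argument uses the hypothesis exactly as stated and handles every pair uniformly. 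Your frame-class argument avoids completeness altogether, but it only covers the chain-ordered pairs.
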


%%%%%%%%%%%%%%%%%%%%%%%%%%%%%%%%%%%%%%%%%%%%%%%%%%%%%%%%%%%%%%%%%%%%%%%%%%%%%%%%
\section{The case for \texorpdfstring{$\csf$}{CS4}}
%%%%%%%%%%%%%%%%%%%%%%%%%%%%%%%%%%%%%%%%%%%%%%%%%%%%%%%%%%%%%%%%%%%%%%%%%%%%%%%%
We first show that many reductions involving negations and boxes are available in $\csf$:
\begin{lemma}
    \label{lem::reductions-csf}
    For all formula $\varphi$:
    \begin{enumerate}
        \item \label{it::nnn} 
            $\csf\vdash \neg\neg\neg\varphi\leftrightarrow \neg \varphi$;
        \item \label{it::bb} 
            $\csf\vdash \Box\Box\varphi \leftrightarrow \Box\varphi$;
        \item \label{it::dd} 
            $\csf\vdash \Diamond\Diamond \varphi \leftrightarrow\Diamond\varphi$;
        \item \label{it::bdbd} 
            $\csf\vdash \Box\Diamond\Box\Diamond \varphi \leftrightarrow \Box\Diamond\varphi$;
        \item \label{it::dbdb} 
            $\csf\vdash \Diamond\Box\Diamond\Box \varphi \leftrightarrow \Diamond\Box \varphi$;
        \item \label{it::ndd} 
            $\csf\vdash \Box\neg \neg\Box\neg  \varphi \leftrightarrow \Box\neg \varphi$; and
        \item \label{it::nbdbd} 
            $\csf\vdash \Box\neg\Box\neg\Box\neg\Box\neg \varphi \leftrightarrow \Box\neg\Box\neg\varphi$.
    \end{enumerate}
\end{lemma}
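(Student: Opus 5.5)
All seven items are derived purely syntactically in $\csf$, using monotonicity together with \ax{T}, \ax{4} and intuitionistic propositional reasoning. The first thing to establish is monotonicity of the modalities: if $\csf\vdash\alpha\to\beta$, then \ax{Nec} and \ax{K} give $\csf\vdash\Box\alpha\to\Box\beta$ and $\csf\vdash\Diamond\alpha\to\Diamond\beta$. Consequently, writing $f(\alpha):=\Box\neg\alpha$, the operator $f$ is antitone: from $\vdash\alpha\to\beta$ we get $\vdash\neg\beta\to\neg\alpha$, and hence $\vdash f(\beta)\to f(\alpha)$.

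Item~\ref{it::nnn} is an intuitionistic tautology. Items~\ref{it::bb} and~\ref{it::dd} follow directly, since \ax{4} gives one direction and \ax{T} (instantiated at $\Box\varphi$, resp.\ $\Diamond\varphi$) gives the other.

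For item~\ref{it::bdbd}:
\begin{itemize}
\item Left to right: \ax{T} gives $\Box\Diamond\varphi\to\Diamond\varphi$. Monotonicity of $\Diamond$ then yields $\Diamond\Box\Diamond\varphi\to\Diamond\Diamond\varphi\to\Diamond\varphi$, and monotonicity of $\Box$ yields $\Box\Diamond\Box\Diamond\varphi\to\Box\Diamond\varphi$.
\item Right to left: \ax{T} gives $\Box\Diamond\varphi\to\Diamond\Box\Diamond\varphi$, so $\Box\Box\Diamond\varphi\to\Box\Diamond\Box\Diamond\varphi$, and we precompose with \ax{4}.
\end{itemize}
Item~\ref{it::dbdb} is dual:
\begin{itemize}
\item Left to right: $\Box\Diamond\Box\varphi\to\Diamond\Box\varphi$ gives $\Diamond\Box\Diamond\Box\varphi\to\Diamond\Diamond\Box\varphi\to\Diamond\Box\varphi$.
\item Right to left: $\Box\varphi\to\Box\Box\varphi\to\Box\Diamond\Box\varphi$, using \ax{4} and then monotonicity applied to $\Box\varphi\to\Diamond\Box\varphi$. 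Applying $\Diamond$ gives the claim.
\end{itemize}

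For item~\ref{it::ndd}:
\begin{itemize}
\item Left to right: \ax{T} gives $\Box\neg\varphi\to\neg\varphi$. Intuitionistically this yields $\neg\neg\Box\neg\varphi\to\neg\neg\neg\varphi$, which implies $\neg\varphi$ by item~\ref{it::nnn}. Boxing gives $\Box\neg\neg\Box\neg\varphi\to\Box\neg\varphi$.
\item Right to left: $\Box\neg\varphi\to\Box\Box\neg\varphi$ by \ax{4}, and then monotonicity applied to $\Box\neg\varphi\to\neg\neg\Box\neg\varphi$.
\end{itemize}

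Item~\ref{it::nbdbd} says $f^4\varphi\leftrightarrow f^2\varphi$, and is the main obstacle. We cannot use $\alpha\to f^2\alpha$, which is a \ax{B}-like principle not available here. The plan is to first prove the one-sided inequality $\vdash f(\alpha)\to f^3(\alpha)$ for every $\alpha$, and then get both directions of item~\ref{it::nbdbd} from it using only antitonicity.

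To prove $\vdash f(\alpha)\to f^3(\alpha)$, first note that $\Box\neg\alpha\land\Box\neg\Box\neg\alpha\to\bot$: \ax{T} turns the second conjunct into $\neg\Box\neg\alpha$. Hence $\vdash\Box\neg\alpha\to\neg f(f(\alpha))$. Boxing this and precomposing with \ax{4} gives $\Box\neg\alpha\to\Box\Box\neg\alpha\to\Box\neg f^2(\alpha)=f^3(\alpha)$.

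Now item~\ref{it::nbdbd} follows in two steps:
\begin{itemize}
\item Instantiating the inequality at $\alpha:=f(\varphi)$ gives $f^2\varphi\to f^4\varphi$.
\item Applying antitonicity of $f$ to $f\varphi\to f^3\varphi$ gives $f^4\varphi=f(f^3\varphi)\to f(f\varphi)=f^2\varphi$.
\end{itemize}
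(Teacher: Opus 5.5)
Your proof is correct and is essentially the paper's argument. For item 7, the paper likewise derives $f(\alpha)\to f^3(\alpha)$: at $\alpha=\Box\neg\varphi$ it gives $f^2\varphi\to f^4\varphi$, and at $\alpha=\varphi$ followed by antitonicity of $\Box\neg$ it gives $f^4\varphi\to f^2\varphi$. The only difference is cosmetic and a slight streamlining: the paper passes through $\Box\neg\neg\Box\neg\alpha$ using item 6, whereas you get the inequality directly from axioms T and 4.
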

\begin{proof}
    Item \ref{it::nnn} is an intuitionistic tautology.
    Items \ref{it::bb}, \ref{it::dd}, \ref{it::bdbd}, and \ref{it::dbdb} follow from axioms $\ax{4}$ and $\ax{T}$ as in the classical case.

    We now prove Item \ref{it::ndd}.
    As $\Box\neg\varphi \to \neg\neg\Box\neg\varphi$ is an intuitionistic tautology, we have $\csf\vdash\Box\Box\neg\varphi \to \Box\neg\neg\Box\varphi$ by $\ax{Nec}$ and $\ax{K}$. By $\ax{4}$, we have $\csf\vdash \Box\neg\varphi \to \Box\neg\neg\Box\neg\varphi$.
    On the other hand, $\Box\neg\varphi \to \neg\varphi$ is an instance of $\ax{T}$.
    By combining intuitionistic tautologies with the use of $\ax{MP}$, we get $\csf\vdash\varphi \to \neg\Box\neg\varphi$. 
    By contraposition, we get $\csf\vdash \neg\neg\Box\neg\varphi \to \neg\varphi$.
    At last, we get $\csf\vdash \Box\neg\neg\Box\neg\varphi \to \Box\neg\varphi$ by $\ax{Nec}$ and $\ax{K}$.
    Therefore $\csf\vdash \Box\neg \neg\Box\neg  \varphi \leftrightarrow \Box\neg \varphi$.
    
    We now prove Item \ref{it::nbdbd}.
    First note that $\Box\neg\Box\neg\Box\neg\varphi\to \neg\Box\neg\Box\neg\varphi$ is an instance of $\ax{T}$.
    By contraposition, we get $\csf\vdash\neg\neg\Box\neg\Box\neg\varphi \to \neg\Box\neg\Box\neg\Box\neg\varphi$.
    By $\ax{Nec}$ and $\ax{K}$, we have $\csf\vdash\Box\neg\neg\Box\neg\Box\neg\varphi \to \Box\neg\Box\neg\Box\neg\Box\neg\varphi$.
    By Item \ref{it::ndd}, we have $\csf\vdash\Box\neg\Box\neg\varphi \to \Box\neg\Box\neg\Box\neg\Box\neg\varphi$.
    On the other hand, note that $\Box\neg\Box\neg \varphi \to \neg\Box\neg\varphi$ is an instance of $\ax{T}$.
    By contraposition and then by applications of $\ax{Nec}$ and $\ax{K}$, we get $\csf\vdash \Box\neg\neg\Box\neg\varphi \to \Box\neg\Box\neg\Box\neg \varphi$.
    By Item \ref{it::ndd}, $\csf\vdash \Box\neg\varphi \to \Box\neg\Box\neg\Box\neg \varphi$.
    By another use of contraposition, $\ax{Nec}$, and $\ax{K}$, we get $\csf\vdash \Box\neg\Box\neg\Box\neg\Box\neg \varphi\to \Box\neg \Box\neg \varphi$.
    Therefore $\csf\vdash \Box\neg\Box\neg\Box\neg\Box\neg \varphi \leftrightarrow \Box\neg\Box\neg\varphi$.
\end{proof}

From Lemma \ref{lem::reductions-csf}, we obtain:
\begin{theorem}
    $\csf$ has finitely many $\{\Box,\Diamond\}$- and $\{\neg,\Box\}$-modalities.
\end{theorem}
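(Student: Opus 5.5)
The plan is to use Lemma~\ref{lem::reductions-csf} as a rewriting system on words, together with the observation that provable equivalence is preserved under prefixing and suffixing. First I check that $\csf$-equivalence is a congruence with respect to concatenation. If $\csf\vdash\sigma\varphi\leftrightarrow\tau\varphi$ for all $\varphi$, then substituting $\rho\varphi$ for $\varphi$ gives the same for $\sigma\rho$ and $\tau\rho$. For prefixing, $\Box$ preserves provable equivalence by $\ax{Nec}$ and $\ax{K}$. The same holds for $\Diamond$, using the second conjunct of $\ax{K}$. It also holds for $\neg$ by intuitionistic logic. By soundness (Theorem~\ref{thm::completeness-nonclassical}), provable equivalence yields $\csf$-equivalence. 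Hence any rewrite $\alpha\mapsto\beta$ licensed by the lemma may be applied to any factor of a word.

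For $\{\Box,\Diamond\}$-modalities, I use Items~\ref{it::bb} and~\ref{it::dd} to collapse repeated letters, so that every word is equivalent to an alternating word. Items~\ref{it::bdbd} and~\ref{it::dbdb} replace any factor $\Box\Diamond\Box\Diamond$ by $\Box\Diamond$ and any factor $\Diamond\Box\Diamond\Box$ by $\Diamond\Box$. Every alternating word of length at least $4$ contains one of these factors, so each rewrite strictly shortens the word. Therefore every word is equivalent to an alternating word of length at most $3$. This gives the seven modalities $\varepsilon,\Box,\Diamond,\Box\Diamond,\Diamond\Box,\Box\Diamond\Box,\Diamond\Box\Diamond$, exactly as in the classical case.

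For $\{\neg,\Box\}$-modalities, I first apply Item~\ref{it::nnn} and Item~\ref{it::bb}. After this, every word is equivalent to one with blocks $\Box$ of length at most $1$ and blocks $\neg$ of length at most $2$. Such a word has the form $\neg^{a_0}\Box\neg^{a_1}\Box\cdots\Box\neg^{a_k}$ with $a_i\in\{0,1,2\}$, where the inner $a_i\ge1$. Next, Item~\ref{it::ndd} rewrites any factor $\Box\neg\neg\Box\neg$ as $\Box\neg$, which removes inner double negations that are followed by $\Box\neg$. Item~\ref{it::nbdbd} rewrites $(\Box\neg)^4$ as $(\Box\neg)^2$. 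The goal is to show that the middle part is a power of $\Box\neg$ of bounded length, and that double negations occur only at the ends or just before the final $\Box$. This leaves a bounded prefix $\neg^{a_0}$, at most three copies of $\Box\neg$, and a bounded suffix. That gives finitely many normal forms.

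The main obstacle is the bookkeeping for the $\{\neg,\Box\}$ case. Item~\ref{it::ndd} only removes $\neg\neg$ when it sits between $\Box$ and $\Box\neg$, so I must handle the positions where this fails. One such position is a final segment $\cdots\Box\neg\neg\Box$. Another is an inner block of the form $\Box\neg\neg\Box\neg\neg\cdots$, where the next block is also a double negation. To resolve the second case, I rewrite right to left: I first reduce the rightmost eligible $\Box\neg\neg\Box\neg$, and note that each rewrite shortens the word. In the terminal state, every interior $\neg$-block between two $\Box$'s has length $1$, except possibly the last one. The argument therefore reduces to an induction on length showing that irreducible words have the shape $\neg^{a}(\Box\neg)^{m}\neg^{b}\Box^{c}\neg^{d}$ with $a,b,d\le 2$, $c\le1$ and $m\le3$. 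Verifying this shape carefully is the part I expect to require the most care.
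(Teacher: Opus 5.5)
Your proposal is correct and takes the paper's approach: the paper gets the theorem directly from Lemma~\ref{lem::reductions-csf}, using its items as length-decreasing rewrite rules, and your congruence check and normal form $\neg^{a}(\Box\neg)^{m}\neg^{b}\Box^{c}\neg^{d}$ just fill in what the paper leaves implicit. Your ``second case'' is not actually an obstacle: $\Box\neg\neg\Box\neg\neg$ already contains the factor $\Box\neg\neg\Box\neg$, and since every rule shortens the word, any rewriting order terminates, so irreducibility alone gives the stated shape without a right-to-left strategy.
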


We now define a $\csf$-model which will witness that $\csf$ has infinitely many $\{\neg,\Diamond\}$-modalities.

\begin{definition}
    \label{def::csf-witness}
    Let $M_\infty^\csf = \tuple{W, W^\bot, \preceq, \R, V}$ be defined as follows.
    Set
    \begin{align*}
        W := &\{a_i^{\emptyset} \mid i\in\omega \} \cup \{a_i^{\{i'\}} \mid i,i'\in\omega \text{ and } i\neq i' \} \\
        &\cup \{b_i^{\emptyset} \mid i\in\omega\} 
         \cup \{b_i^{\{i-1\}} \mid i\in\omega \text{ and } i\geq 1\},
    \end{align*}
    and $W^\bot := \emptyset$.
    We have $w\preceq v$ iff one of the following holds:
    \begin{itemize}
        \item $w = a_i^s$ and $v = a_i^{s'}$;
        \item $w = a_i^s$ and $v = b_i^{s'}$; or
        \item $w = b_i^s$ and $v = b_i^{s'}$.
    \end{itemize}
    Set $\ell(a_i^s) := s$ and $\ell(b_i^{s'}) := s'$, we have $w\sqsubseteq v$ iff $\ell(w) \subseteq \ell(v)$.
    At last, set 
    \[
        V(P) :=
        \{a_0^{\emptyset},b_0^{\emptyset} \} \cup \{a_0^{\{i'\}} \mid i'\in\omega \text{ and } i'\neq 0 \}.
    \]
    The model $M_\infty^\csf$ is illustrated in Figure \ref{figure::csf-witness-model}.
    
    \begin{figure}[ht]
    \centering
    \usetikzlibrary{fit}
    % \tikzstyle{world}=[circle,draw,minimum size=5mm,inner sep=0pt]
    \tikzstyle{world}=[]
    \begin{tikzpicture}
        \node[world] (a0e)  at (0,0) {$a_0^{\emptyset}$};
        % \node[world] (a00)  at (0,1) {$a_0^{\{0\}}$};
        \node[world] (a01)  at (0,2) {$a_0^{\{1\}}$};
        \node[world] (a02)  at (0,3) {$a_0^{\{2\}}$};
        \node[world] (a03)  at (0,4) {$a_0^{\{3\}}$};
        \node[world] (a0d)  at (0,5) {$\vdots$};
        % \node[world] (a0l)  at (0,6) {\rotatebox[origin=c]{90}{$\preceq$}};
        \node[world] (b0e)  at (0,7) {$b_0^{\emptyset}$};
        % \node[world] (b0m)  at (0,8) {$b_0^{\{-1\}}$};
        
        \node[world] (a1e)  at (1.5,0) {$a_1^{\emptyset}$};
        \node[world] (a10)  at (1.5,1) {$a_1^{\{0\}}$};
        % \node[world] (a11)  at (1.5,2) {$a_1^{\{1\}}$};
        \node[world] (a12)  at (1.5,3) {$a_1^{\{2\}}$};
        \node[world] (a13)  at (1.5,4) {$a_1^{\{3\}}$};
        \node[world] (a1d)  at (1.5,5) {$\vdots$};
        \node[world] (b1e)  at (1.5,7) {$b_1^{\emptyset}$};
        \node[world] (b1m)  at (1.5,8) {$b_1^{\{0\}}$};

        \node[world] (a2e)  at (3,0) {$a_2^{\emptyset}$};
        \node[world] (a20)  at (3,1) {$a_2^{\{0\}}$};
        \node[world] (a21)  at (3,2) {$a_2^{\{1\}}$};
        % \node[world] (a22)  at (3,3) {$a_2^{\{2\}}$};
        \node[world] (a23)  at (3,4) {$a_2^{\{3\}}$};
        \node[world] (a2d)  at (3,5) {$\vdots$};
        \node[world] (b2e)  at (3,7) {$b_2^{\emptyset}$};
        \node[world] (b2m)  at (3,8) {$b_2^{\{1\}}$};

        \node[world] (a3e)  at (4.5,0) {$a_3^{\emptyset}$};
        \node[world] (a30)  at (4.5,1) {$a_3^{\{0\}}$};
        \node[world] (a31)  at (4.5,2) {$a_3^{\{1\}}$};
        \node[world] (a32)  at (4.5,3) {$a_3^{\{2\}}$};
        % \node[world] (a33)  at (4.5,4) {$a_3^{\{3\}}$};
        \node[world] (a3d)  at (4.5,5) {$\vdots$};
        \node[world] (b3e)  at (4.5,7) {$b_3^{\emptyset}$};
        \node[world] (b3m)  at (4.5,8) {$b_3^{\{2\}}$};
        
        \node[world] (a4e)  at (6,0) {$\cdots$};
        \node[world] (a40)  at (6,1) {$\dots$};
        \node[world] (a41)  at (6,2) {$\dots$};
        \node[world] (a42)  at (6,3) {$\dots$};
        \node[world] (a43)  at (6,4) {$\dots$};
        % \node[world] (a4d)  at (6,5) {$\vdots$};
        \node[world] (b4e)  at (6,7) {$\dots$};
        \node[world] (b4m)  at (6,8) {$\dots$};

        \node[draw,fit=(a0e) (a0d)] {};
        \node[draw,fit=(a1e) (a1d)] {};
        \node[draw,fit=(a2e) (a2d)] {};
        \node[draw,fit=(a3e) (a3d)] {};
        
        \node[draw,fit=(b0e)      ] {};
        \node[draw,fit=(b1e) (b1m)] {};
        \node[draw,fit=(b2e) (b2m)] {};
        \node[draw,fit=(b3e) (b3m)] {};

        \draw[->] (a0d) -- (b0e) node[midway,left] {$\preceq$};
        \draw[->] (a1d) -- (b1e) node[midway,left] {$\preceq$};
        \draw[->] (a2d) -- (b2e) node[midway,left] {$\preceq$};
        \draw[->] (a3d) -- (b3e) node[midway,left] {$\preceq$};
    \end{tikzpicture}
    \caption{The model $M_\infty^\csf$ witnessing the pairwise non-equivalence of $\{(\neg\Diamond)^n P \mid n\in\omega\}$ over $\csf$. We denote only the $\preceq$-clusters.}
    \label{figure::csf-witness-model}
    \end{figure}
\end{definition}

\begin{lemma}
    \label{lem::is-csf-model}
    Let $M_\infty^\csf = \tuple{W, W^\bot, \preceq, \R, V}$ be defined as above.
    Then $M_\infty^\csf$ is a $\csf$-model.
\end{lemma}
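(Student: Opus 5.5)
We check the requirements of Definition~\ref{def::csf-model} one at a time: first that $\preceq$ and $\R$ are preorders, then the conditions on $V$ and $W^\bot$, and finally backward confluence. Most of these are immediate, and the only real work is backward confluence.

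For $\preceq$, reflexivity follows from the first and third clauses, taking $s'=s$. For transitivity, suppose $w\preceq v\preceq u$. The subscript index $i$ is preserved along $\preceq$, and a $b$-world is never $\preceq$-below an $a$-world. So the letter pattern of $w,v,u$ is one of $aaa$, $aab$, $abb$, $bbb$, all with the same index $i$, and in each case $w\preceq u$ by the appropriate clause. The relation $\R$ is defined as the pullback of $\subseteq$ along the labelling $\ell$, so it is a preorder because $\subseteq$ is.

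Since $W^\bot=\emptyset$, it is trivially both a $\preceq$-upset and a $\R$-upset, and trivially contained in $V(P)$. It remains to see that $V(P)$ is a $\preceq$-upset. The $\preceq$-successors of any $a_0^s$ are exactly the worlds $a_0^{s'}$ together with $b_0^\emptyset$, since $b_0^{\{-1\}}$ does not exist. All of these lie in $V(P)$, because $V(P)$ contains every $a_0$-world (recall $a_0^{\{0\}}\notin W$). The only $\preceq$-successor of $b_0^\emptyset$ is $b_0^\emptyset$ itself. On any other proposition letter we may take $V$ to be $\emptyset$, or the same set.

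For backward confluence, suppose $w\R v\preceq v'$; we must find $w'$ with $w\preceq w'$ and $w'\R v'$. Write $v=x_i^t$ and $v'=y_i^{t'}$, and $w=z_j^s$ with $s\subseteq t$. The strategy is to choose $w'$ within the $\preceq$-cone of $w$ so that $\ell(w')\subseteq t'$. The easiest choice is a world with empty label, which works whenever it exists.
\begin{itemize}
\item If $w=a_j^s$, take $w'=a_j^\emptyset$. Then $w\preceq w'$ and $\ell(w')=\emptyset\subseteq t'$, so $w'\R v'$.
\item If $w=b_j^s$, take $w'=b_j^\emptyset$, which belongs to $W$ for every $j$. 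By the third clause $b_j^s\preceq b_j^\emptyset$, and again $\emptyset\subseteq t'$.
\end{itemize}
Every $\preceq$-cluster contains a world with empty label, so backward confluence holds without using anything about $v$ or $v'$. The remaining concern is bookkeeping: checking that each chosen world actually exists in $W$, given the excluded labels $a_i^{\{i\}}$ and $b_0^{\{-1\}}$. Empty labels are always present, so this causes no problem. This completes the verification that $M_\infty^\csf$ is a $\csf$-model.
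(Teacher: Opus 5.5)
Your proof is correct and follows the paper's approach: everything except backward confluence is routine, and backward confluence is witnessed by moving from $w$ to a $\preceq$-successor with empty label, which $\sqsubseteq$-sees every world. The paper takes $b_i^\emptyset$ for both $a$- and $b$-worlds, whereas you take $a_j^\emptyset$ for $a$-worlds; either choice works.
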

\begin{proof}
    It is immediate from the definition that $\preceq$ and $\R$ are preorders and that $V(P)$ is a $\preceq$-upset.
    As $W^\bot = \emptyset$, its requirements are vacuously met.

    The only interesting property is backward confluence.
    Suppose that $w\R v \preceq v'$.
    If $w = a_i^s$ or $w = b_i^s$, let $w'$ be $b_i^{\{\emptyset\}}$.
    Then $w \preceq w'$ and $w' \R v'$.
    Therefore $\mcsf$ is backward confluent.
\end{proof}

\begin{lemma}
    \label{lem::csf-witnesses-infinity}
    Let $\mcsf = \tuple{W, W^\bot, \preceq, \R, V}$ be defined as above.
    For all $n\in\omega$:
    \[
        \|(\neg\Diamond)^{n} P\|^\mcsf = \{w \in W \mid \ell(w) = n+1 \}.
    \]
\end{lemma}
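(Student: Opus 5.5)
The set on the right-hand side should be read as the worlds whose \emph{subscript} is $n$, that is, $\{a_n^s \mid a_n^s\in W\}\cup\{b_n^s \mid b_n^s\in W\}$. The condition $\ell(w)=n+1$ as literally written compares a set with a number. For $n=0$ this reading gives exactly $V(P)$, and a direct computation of $\|\neg\Diamond P\|$ gives the worlds with subscript $1$, so this is the intended meaning. I will prove it by induction on $n$. Write $C_n$ for this set, and $\varphi_n := (\neg\Diamond)^nP$. The base case $\|\varphi_0\|=\|P\|=V(P)=C_0$ is immediate from the definition of $V$.

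For the inductive step, assume $\|\varphi_n\| = C_n$. First I compute $\|\Diamond\varphi_n\|$. Since $W^\bot=\emptyset$ and $u\R v$ iff $\ell(u)\subseteq\ell(v)$, a world $v$ has an $\R$-successor in $C_n$ iff $\ell(v)$ is contained in the label of some world of $C_n$. The labels occurring in $C_n$ are $\emptyset$, all $\{i'\}$ with $i'\neq n$ (from the $a_n$'s), and possibly $\{n-1\}$ (from $b_n$). Hence the only label \emph{not} covered is $\{n\}$. So $v$ fails to see $C_n$ iff $\ell(v)=\{n\}$, and the worlds with label $\{n\}$ are exactly $a_i^{\{n\}}$ for $i\neq n$ together with $b_{n+1}^{\{n\}}$.

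Then I use the $\Diamond$ clause, which quantifies over all $\preceq$-successors. $M,w\models\Diamond\varphi_n$ iff no $\preceq$-successor of $w$ carries label $\{n\}$. The $\preceq$-successors of $a_i^s$ are the whole $a_i$-cluster and the $b_i$-cluster; those of $b_i^s$ are the $b_i$-cluster. Therefore $\Diamond\varphi_n$ fails exactly on:
\begin{itemize}
\item the $a_i$-clusters for $i\neq n$, since each contains $a_i^{\{n\}}$;
\item the $b_{n+1}$-cluster;
\item the $a_{n+1}$-cluster, which is $\preceq$-below the $b_{n+1}$-cluster.
\end{itemize}
It holds on the $a_n$-cluster and on the $b_i$-clusters with $i\neq n+1$.

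Finally I compute $\neg\Diamond\varphi_n$. It holds at $w$ iff every $\preceq$-successor of $w$ fails $\Diamond\varphi_n$. For the $a_{n+1}$- and $b_{n+1}$-clusters, all $\preceq$-successors lie in those two clusters, so $\neg\Diamond\varphi_n$ holds there. For $a_i^s$ with $i\neq n+1$, the successor $b_i^\emptyset$ satisfies $\Diamond\varphi_n$. For $b_i^s$ with $i\neq n+1$, the world itself satisfies $\Diamond\varphi_n$. Hence $\|\varphi_{n+1}\|=C_{n+1}$.

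The main obstacle is the careful bookkeeping of which labels occur in $C_n$, including the edge cases. For $n=0$ there is no $b_0^{\{-1\}}$, and $a_n^{\{n\}}$ does not exist, so $\{n\}$ is indeed missing. There is no further missing label, because $\{n-1\}$ is supplied by $a_n^{\{n-1\}}$ anyway when $n\geq1$. The rest is routine unfolding of the semantic clauses, using Lemma~\ref{lem::preservation} only implicitly.
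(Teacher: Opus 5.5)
Your proof is correct, and reading the right-hand side as the set of worlds with subscript $n$ matches the paper's own proof, whose induction hypothesis is $\{a_i^s, b_i^s \in W \mid i=n\}$. Your route is the same induction with direct unfolding of the clauses. The paper is slightly more economical: it only checks that $b_{n+1}^{\{n\}}$ refutes $\Diamond(\neg\Diamond)^nP$ and that $b_m^{\emptyset}$ satisfies it for $m\neq n+1$, then uses that these worlds lie $\preceq$-above their whole columns; your ``only label $\{n\}$ is missing'' observation instead computes $\|\Diamond(\neg\Diamond)^nP\|$ exactly.
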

\begin{proof}
    We prove this lemma by induction on $n$.
    The case for $n = 0$ follows directly from the definition of $\mcsf$, as $\| P \|^\mcsf = \{a_0^{\emptyset},b_0^{\emptyset} \} \cup \{a_0^{\{i'\}} \mid i'\in\omega \text{ and } i'\neq 0 \}$.

    Suppose that $\|(\neg\Diamond)^{n} P\|^\mcsf = \{a_i^s, b_i^s \in W \mid i=n \}$.
    First note that $w\preceq b_{n+1}^{n}$ holds for any $w\in W$ such that $\ell(w) = n+1$; as $b_{n+1}^s$ does not access by $\R$ any $v$ such that $\ell(v) = n$, we have that $\{w \in W \mid \ell(w) = n+1 \} \subseteq \|(\neg\Diamond)^{n} P\|^\mcsf$.
    Now, note that if $m\neq n+1$, then $M,b_m^\emptyset \models \Diamond(\neg\Diamond)^n P$: we have that $b_m^\emptyset \R b_n^\emptyset$ and, if $m\neq 0$, $b^{m-1}_m \R a_n^{m-1}$ since $m-1\neq n$.
    As $w\preceq b_m^\emptyset$ for any $w\in W$ such that $\ell(w) = m$, we have $\|(\neg\Diamond)^{n} P\|^\mcsf \subseteq \{w \in W \mid \ell(w) = n+1 \}$.
    Therefore $\|(\neg\Diamond)^{n} P\|^\mcsf = \{w \in W \mid \ell(w) = n+1 \}$.
\end{proof}

Lemma \ref{lem::csf-witnesses-infinity} readily implies that $\{(\neg\Diamond)^n \in\{\neg,\Diamond\}^{<\omega} \mid n\in\omega\}$ is a set of pairwise non-equivalent modalities. Therefore:
\begin{theorem}
    $\csf$ has infinitely many $\{\neg,\Diamond\}$-modalities.
\end{theorem}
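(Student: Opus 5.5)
The theorem follows once we know the modalities $(\neg\Diamond)^n$, $n\in\omega$, are pairwise $\csf$-inequivalent. An infinite family of pairwise inequivalent $\{\neg,\Diamond\}$-modalities rules out any finite set $\Sigma$ of representatives. Indeed, if such a $\Sigma$ existed, by pigeonhole two distinct $(\neg\Diamond)^n$ and $(\neg\Diamond)^m$ would be $\csf$-equivalent to the same $\sigma\in\Sigma$, hence to each other, since $\csf$-equivalence is transitive (it is equality of truth sets in every model).

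To separate $(\neg\Diamond)^n$ from $(\neg\Diamond)^m$ for $n\neq m$, I use the model $\mcsf$. By Lemma \ref{lem::is-csf-model} it is a $\csf$-model, so it is a legitimate witness for the definition of $\csf$-equivalence. I instantiate $\varphi := P$ and invoke Lemma \ref{lem::csf-witnesses-infinity}. The truth set of $(\neg\Diamond)^nP$ is the set of worlds carrying the index $n$ (the stratum $\{a_n^s,b_n^s\}$, as in the inductive hypothesis of that proof). The truth set of $(\neg\Diamond)^mP$ is the corresponding stratum for $m$.

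These strata are nonempty, for instance $b_n^\emptyset$ belongs to the $n$-th one. They are also disjoint for $n\neq m$. Hence
\[
\|(\neg\Diamond)^nP\|^{\mcsf}\neq\|(\neg\Diamond)^mP\|^{\mcsf},
\]
so the two modalities are not $\csf$-equivalent.

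The only real work lies in Lemma \ref{lem::csf-witnesses-infinity}, which we may assume. The one point to check is that the sets it identifies really are nonempty and pairwise distinct as $n$ varies. The world $b_n^\emptyset$ exists for every $n$, which settles this, so I expect no serious obstacle beyond that verification.
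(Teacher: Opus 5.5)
Your proof is correct and takes the same route as the paper: it derives the theorem from Lemma~\ref{lem::csf-witnesses-infinity} applied to $\mcsf$ and $P$, and your pigeonhole argument and check that the truth sets are nonempty and pairwise distinct simply spell out the paper's ``readily implies''. You were also right to read that lemma as saying $\|(\neg\Diamond)^nP\|^{\mcsf}$ is the set of worlds with subscript $n$, which matches its base case and inductive hypothesis and is confirmed by direct computation, rather than the misprinted $\{w\mid\ell(w)=n+1\}$, which is ill-typed since $\ell(w)$ is a set.
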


%%%%%%%%%%%%%%%%%%%%%%%%%%%%%%%%%%%%%%%%%%%%%%%%%%%%%%%%%%%%%%%%%%%%%%%%%%%%%%%%
\section{The case for \texorpdfstring{$\isf$}{IS4} and \texorpdfstring{$\gsf$}{GS4}}
%%%%%%%%%%%%%%%%%%%%%%%%%%%%%%%%%%%%%%%%%%%%%%%%%%%%%%%%%%%%%%%%%%%%%%%%%%%%%%%%
We first show that many reductions involving negations and boxes are available in $\isf$:
\begin{lemma}
    \label{lem::reductions-isf}
    Let $\varphi$ be a formula. Then:
    \begin{enumerate}
        \item \label{it::swap}
            $\isf \vdash \Box\neg\varphi \leftrightarrow \neg\Diamond\varphi$;
        \item \label{it::nbb} 
            $\isf \vdash \neg\Diamond\neg \neg\Diamond\varphi \leftrightarrow \neg\Diamond \varphi$; and
        \item \label{it::ndbdb} 
            $\isf \vdash \neg\Diamond\neg\Diamond\neg\Diamond\neg\Diamond \varphi \leftrightarrow \neg\Diamond\neg\Diamond\varphi$.
    \end{enumerate}
\end{lemma}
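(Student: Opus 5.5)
The plan is to prove Item \ref{it::swap} first, and then obtain Items \ref{it::nbb} and \ref{it::ndbdb} by rewriting with Item \ref{it::swap} and appealing to Lemma \ref{lem::reductions-csf}, which holds in $\isf$ since $\csf\subseteq\isf$.

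For Item \ref{it::swap}, I would prove the two directions separately.

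For the direction $\Box\neg\varphi\to\neg\Diamond\varphi$, I use $\ax{K}$ in its diamond form. Instantiating the second conjunct with $\psi:=\bot$ gives $\Box(\varphi\to\bot)\to(\Diamond\varphi\to\Diamond\bot)$. Combining this with $\ax{N_\Diamond}$, i.e.\ $\neg\Diamond\bot$, yields $\Box\neg\varphi\to\neg\Diamond\varphi$ by intuitionistic reasoning.

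For the converse $\neg\Diamond\varphi\to\Box\neg\varphi$, I use $\ax{FS}$ with $\psi:=\bot$. This gives $(\Diamond\varphi\to\Box\bot)\to\Box\neg\varphi$, so it suffices to show $\neg\Diamond\varphi\to(\Diamond\varphi\to\Box\bot)$. That is immediate, since $\neg\Diamond\varphi\land\Diamond\varphi$ yields $\bot$ and hence, by ex falso, $\Box\bot$. This direction is the only place where the $\isf$-specific axiom $\ax{FS}$ is needed. It is also the step where care is needed: in $\csf$ the equivalence fails, and this failure is exactly what the model $\mcsf$ exploits.

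Items \ref{it::nbb} and \ref{it::ndbdb} then follow by rewriting each $\neg\Diamond$ as $\Box\neg$. Replacement of provable equivalents is available in $\isf$ because $\ax{Nec}$ and $\ax{K}$ make $\Box$ and $\Diamond$ monotone, and implication and negation respect provable equivalence intuitionistically. Under this rewriting, $\neg\Diamond\neg\neg\Diamond\varphi$ becomes $\Box\neg\neg\Box\neg\varphi$ and $\neg\Diamond\varphi$ becomes $\Box\neg\varphi$. These are equivalent by Item \ref{it::ndd} of Lemma \ref{lem::reductions-csf}.

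Similarly, $\neg\Diamond\neg\Diamond\neg\Diamond\neg\Diamond\varphi$ becomes $\Box\neg\Box\neg\Box\neg\Box\neg\varphi$ and $\neg\Diamond\neg\Diamond\varphi$ becomes $\Box\neg\Box\neg\varphi$. These are equivalent by Item \ref{it::nbdbd} of Lemma \ref{lem::reductions-csf}.

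When doing the rewriting I would substitute from the innermost occurrence outward, so that each step applies Item \ref{it::swap} to a single subformula.
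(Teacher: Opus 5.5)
Your proof is correct and follows the paper's argument. For Item~\ref{it::swap} you use the diamond half of $\ax{K}$ with $\psi:=\bot$ together with $\ax{N_\Diamond}$ in one direction, and $\ax{FS}$ with $\psi:=\bot$ plus ex falso in the other; Items~\ref{it::nbb} and~\ref{it::ndbdb} then come from rewriting $\neg\Diamond$ as $\Box\neg$ and invoking Items~\ref{it::ndd} and~\ref{it::nbdbd} of Lemma~\ref{lem::reductions-csf}, with replacement of equivalents justified via $\ax{Nec}$ and $\ax{K}$, which the paper leaves implicit.
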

\begin{proof}
    Fix a formula $\varphi$.
    By $\ax{K}$, we have $\isf\vdash \Box(\varphi\to\bot)\to(\Diamond\varphi\to\Diamond\bot)$.
    By $\ax{N_\Diamond}$, we have  $\isf\vdash \Box(\varphi\to\bot)\to(\Diamond\varphi\to\bot)$.
    That is, $\isf\vdash \Box\neg\varphi \to \neg\Diamond\varphi$.
    On the other hand, $\bot\to\Box\bot$ is an intuitionistic tautology.
    So, by propositional reasoning, we have $\isf \vdash (\Diamond\varphi \to \bot) \to (\Diamond\varphi \to \Box\bot)$.
    As $(\Diamond\varphi\to\Box\bot) \to \Box(\varphi\to\bot)$ is an instance of $\ax{FS}$, we have $\isf\vdash (\Diamond\varphi\to\bot)\to \Box(\varphi\to\bot)$.
    That is, $\isf\vdash \neg\Diamond\varphi\to\Box\neg\varphi$.
    This concludes the proof of Item \ref{it::swap}.

    Items \ref{it::nbb} and \ref{it::ndbdb} then follow from Item \ref{it::swap} along with Lemma \ref{lem::reductions-csf}.
\end{proof}

From Lemma \ref{lem::reductions-isf}, we obtain:
\begin{theorem}
    $\isf$ has finitely many $\{\neg,\Diamond\}$-modalities.
\end{theorem}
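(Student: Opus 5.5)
The plan is to turn Lemma~\ref{lem::reductions-isf} into a rewriting system on $\{\neg,\Diamond\}$-words and show that only finitely many words are irreducible. Two equivalent modalities can be substituted for one another anywhere in a modality. On the left this is immediate, because $\Lambda$-equivalence is defined pointwise over all formulas. On the right it holds because equivalence over all models and all formulas is preserved under prefixing: if $\|\sigma\varphi\|^M=\|\tau\varphi\|^M$ for every $\varphi$, then applying $\neg$ or $\Diamond$ on the outside preserves the equality, since the truth of $\neg\psi$ and $\Diamond\psi$ at a world depends only on $\|\psi\|^M$. So every $\{\neg,\Diamond\}$-modality may be rewritten, inside any context, using the following rules:
\[
\neg\neg\neg \to \neg,\quad \Diamond\Diamond\to\Diamond,\quad \neg\Diamond\neg\neg\Diamond\to\neg\Diamond,\quad (\neg\Diamond)^4\to(\neg\Diamond)^2 .
\]
The first two are Items~\ref{it::nnn} and~\ref{it::dd} of Lemma~\ref{lem::reductions-csf}, which hold in $\isf$ by Proposition~\ref{lem::transfer}, or simply because $\csf\subseteq\isf$. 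The last two are Items~\ref{it::nbb} and~\ref{it::ndbdb} of Lemma~\ref{lem::reductions-isf}. Each rule strictly shortens the word, so every modality is equivalent to an irreducible one.

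Next I analyse the shape of an irreducible word. Since $\Diamond\Diamond$ does not occur and at most two $\neg$'s are adjacent, such a word has the form
\[
\neg^{k_0}\Diamond\neg^{k_1}\Diamond\cdots\Diamond\neg^{k_m},
\]
where $k_0,k_m\in\{0,1,2\}$ and every inner exponent lies in $\{1,2\}$. Forbidding $\neg\Diamond\neg\neg\Diamond$ means that an inner block $\neg\neg$ cannot be preceded by $\neg\Diamond$. Hence any inner $k_j=2$ with $j\geq 2$ is impossible, because $k_{j-1}\geq 1$ supplies that $\neg\Diamond$. The one remaining case is $k_1=2$ with $k_0=0$. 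So, apart possibly from a leading $\Diamond\neg\neg\Diamond$, the middle consists of a run of $\neg\Diamond$ blocks, and the rule $(\neg\Diamond)^4\to(\neg\Diamond)^2$ bounds the length of that run by $3$.

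Finally I bound $m$. Writing the word as a bounded prefix (at most $\neg\neg$ or $\Diamond\neg\neg$), followed by $(\neg\Diamond)^r$ with $r\le 3$, followed by a bounded suffix of $\neg$'s, gives $m\le 5$ or so. Hence there are only finitely many irreducible words, and every $\{\neg,\Diamond\}$-modality is $\isf$-equivalent to one of them.

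The main obstacle is this case analysis: checking that no long irreducible word slips through. The delicate points are that $(\neg\Diamond)^4$ also forbids long runs that are interrupted only at their ends, and that after a leading $\Diamond\neg\neg\Diamond$ the next block is again $\neg\Diamond$, which falls into the bounded run. I would handle this by parsing the word from the right into maximal $(\neg\Diamond)$-runs and showing that at most one such run, together with constant-length ends, can occur. The congruence claim is routine, given that the semantic clauses for $\neg$ and $\Diamond$ depend only on truth sets.
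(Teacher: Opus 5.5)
Your proposal is correct and is the argument the paper leaves implicit when it derives the theorem from Lemma~\ref{lem::reductions-isf} together with the $\neg\neg\neg$ and $\Diamond\Diamond$ reductions of Lemma~\ref{lem::reductions-csf}; done precisely, your case analysis shows every irreducible word has the form $\pi(\neg\Diamond)^r\rho$ with $\pi\in\{\varepsilon,\neg,\Diamond,\Diamond\neg\}$, $r\le 3$, and $\rho\in\{\varepsilon,\neg,\neg\neg\}$, so your rough bound on $m$ is enough. Two minor slips: the prefixes $\neg\neg$ and $\Diamond\neg\neg$ you list would create $\neg\neg\neg$ in front of a nonempty $(\neg\Diamond)^r$ run, and the transfer proposition concerns the number of modalities rather than individual equivalences, so the right justification is simply $\csf\subseteq\isf$.
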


We now define a $\gsf$-model which will witness that $\gsf$ has infinitely many $\{\neg,\Diamond\}$-modalities.
\begin{definition}
    \label{def::gsfc-witness}
    Let $\mgsf = \tuple{W, W^\bot, \preceq, \R, V}$ be defined as follows.
    Set $W := \{a\} \cup \{b_i \mid i\in\omega\} \cup \{c_i \mid i\in\omega\}$ and $W^\bot := \emptyset$.
    We have $w\preceq v$ iff one of the following holds:
    \begin{itemize}
        \item $w = v$; or
        \item $w = c_i$ and $v = b_i$.
    \end{itemize}
    We have $w\sqsubseteq v$ iff one of the following holds:
    \begin{itemize}
        \item $w = v$;
        \item $w = b_i$ and $v = a$;
        \item $w = b_i$, $v = b_j$, and $j\leq i$; or
        \item $w = b_i$, $v = c_j$, and $j < i$.
    \end{itemize}
    At last, set $V(P) := \{w\}$.
    
    \begin{figure}[ht]
    \centering
    % \tikzstyle{world}=[circle,draw,minimum size=5mm,inner sep=0pt]
    \tikzstyle{world}=[]
    \begin{tikzpicture}
        \node[world] (a)  at (6,0) {$a$};
        \node        (p)  at (6.8,0) {$\in V(P)$};
        
        \node[world] (c0) at (4,-2) {$c_0$};
        \node[world] (b0) at (4, 0) {$b_0$};
        
        \node[world] (c1) at (2,-2) {$c_1$};
        \node[world] (b1) at (2, 0) {$b_1$};
        
        \node[world] (c2) at (0,-2) {$c_2$};
        \node[world] (b2) at (0, 0) {$b_2$};
        
        \node        (c3) at (-2,-2) {$\cdots$};
        \node        (b3) at (-2, 0) {$\cdots$};

        \draw[->] (b0) -- (a) node[midway,above] {$\R$};
        
        \draw[->] (c0) -- (b0) node[midway,right] {$\preceq$};
        \draw[->] (c1) -- (b1) node[midway,right] {$\preceq$};
        \draw[->] (c2) -- (b2) node[midway,right] {$\preceq$};
        
        \draw[->] (b1) -- (b0) node[midway,above] {$\R$};
        \draw[->] (b2) -- (b1) node[midway,above] {$\R$};
        \draw[->] (b3) -- (b2) node[midway,above] {$\R$};
        
        \draw[->] (b1) -- (c0) node[midway,above right] {$\R$};
        \draw[->] (b2) -- (c1) node[midway,above right] {$\R$};
        \draw[->] (b3) -- (c2) node[midway,above right] {$\R$};
    \end{tikzpicture}
    \caption{The model $\mgsf$ witnessing the pairwise non-equivalence of $\{(\Diamond\neg\neg\Box)^n P \mid n\in\omega\}$ over $\gsf$. We omit reflexive and transitive arrows.}
    \label{figure::gsfc-witness-model}
    \end{figure}
\end{definition}

\begin{lemma}
    \label{lem::is-gsf-model}
    Let $M_\infty^\gsf = \tuple{W, W^\bot, \preceq, \R, V}$ be defined as above.
    Then $M_\infty^\gsf$ is a $\gsf$-model.
\end{lemma}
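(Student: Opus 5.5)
The plan is to verify directly each clause in the definition of a $\gsf$-model, that is, a locally linear, forward and backward confluent $\csf$-model. Each check is a finite case analysis on the shapes of the worlds $a$, $b_i$, $c_i$. I read the valuation as $V(P) := \{a\}$, as in Figure \ref{figure::gsfc-witness-model}.

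\textbf{Basic requirements.} The relation $\preceq$ is a preorder: it is the identity together with the pairs $c_i \preceq b_i$, and no chains of length two arise. The relation $\R$ is reflexive by definition. For transitivity of $\R$, only worlds of the form $b_i$ have non-reflexive $\R$-successors, so only chains through some $b_j$ need checking.
\begin{itemize}
    \item If $b_i \R b_j \R a$, then $b_i \R a$ holds directly.
    \item If $b_i \R b_j \R b_k$ with $k \le j \le i$, then $k \le i$.
    \item If $b_i \R b_j \R c_k$ with $k < j \le i$, then $k < i$.
\end{itemize}
The set $V(P) = \{a\}$ is a $\preceq$-upset because $a$ has no proper $\preceq$-successor. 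Since $W^\bot = \emptyset$, the conditions on fallible worlds hold vacuously. Local linearity holds because the $\preceq$-upset of every world is a chain: either a singleton or $\{c_i, b_i\}$.

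\textbf{Forward confluence.} Assume $w \R v$ and $w \preceq w'$. If $w' = w$, take $v' := v$. Otherwise $w = c_i$ and $w' = b_i$. The only $\R$-successor of $c_i$ is $c_i$ itself, so $v = c_i$. Take $v' := b_i$; then $b_i \R b_i$ and $c_i \preceq b_i$.

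\textbf{Backward confluence.} Assume $w \R v \preceq v'$. If $v' = v$, take $w' := w$. Otherwise $v = c_j$ and $v' = b_j$, and $w \R c_j$ forces one of two cases.
\begin{itemize}
    \item If $w = c_j$, take $w' := b_j$; then $c_j \preceq b_j$ and $b_j \R b_j$.
    \item If $w = b_i$ with $j < i$, then necessarily $w' := b_i$, since $b_i$ has no proper $\preceq$-successor. This works because $b_i \R b_j$, as $j \le i$.
\end{itemize}

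The only step needing care is this last case of backward confluence. There $w'$ is forced to be $w$ itself, and the argument relies on the clause $j \le i$ in the definition of $\R$ between $b$'s being non-strict, while the clause for the $c$'s is strict. Every other step is routine bookkeeping.
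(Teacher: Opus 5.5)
Your proof is correct and uses the same direct, clause-by-clause verification as the paper. It is in fact more careful than the paper's. The paper's backward-confluence check starts from ``either $w = a$ or $w = b_i$'' and so misses the case $c_j \R c_j \preceq b_j$, which your choice $w' := b_j$ handles. Its local-linearity paragraph is phrased in terms of the worlds $a_n^s, b_n^s$ and labelling $\ell$ of $\mcsf$; your observation that every $\preceq$-upset is a chain is the right argument for this model.

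One small correction to your closing remark: in the case $w = b_i$ you only use $j < i$ to get $b_i \R b_j$. What matters is that $b_i \R c_j$ implies $b_i \R b_j$, not that the $b$-clause is non-strict.
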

\begin{proof}
    It is straightforward to see that $\preceq$ and $\R$ are preorders over $W$ and that $V(P)$ is a $\preceq$-upset.
    As there are no fallible worlds, the conditions on $W^\bot$ are vacuously true.

    Suppose $w \R v \preceq v'$.
    Then either $w = a$ or $w = b_i$.
    In case $w = a$, then $v = v' = a$ too by definition, and so $w \preceq w \R w$.
    In case $w = b_i$, we have two possibilities for $v$.
    If $v = a$, then $v' = a$ too, so $w\preceq w \R a$.
    If $v = b_j$ or $v = c_j$ then $w \R b_j$ too, so $w\preceq w \R b_j$.
    
    Suppose $w\preceq w'$ and $w\R v$.
    If $w = v$, we may set $v' := w'$ and we are done.
    If $w\neq v$, then $w = b_i$ for some $i\in\omega$.
    So $w' = b_i$ too, and so by setting $v' := v$ we are done.

    Now, suppose that $w\preceq v$ and $w\preceq u$.
    Then there is $n\in\omega$ such that $n = \ell(w) = \ell(v) = \ell(u)$.
    If $v = b^s_n$, then $u\preceq v$.
    Similarly, $u = b^s_n$, then $v\preceq u$.
    Otherwise, $v = a_n^s$ and $u = a_n^{s'}$, and so $v\preceq u$.
\end{proof}

\begin{lemma}
    \label{lem::gsf-witnesses-infinity}
    Let $M_\infty^\gsf = \tuple{W, W^\bot, \preceq, \R, V}$ be defined as above.
    Then:
    \[
        \|(\Diamond\neg\neg\Box)^{n+1} P\|^\mgsf = \{a\} \cup \{b_i\mid i\in\omega\} \cup \{ c_i \mid i<n \}.
    \]
\end{lemma}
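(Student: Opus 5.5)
We prove the statement by induction on $n$, computing truth sets directly in $\mgsf$. Throughout we read the valuation as $V(P)=\{a\}$, as in Figure~\ref{figure::gsfc-witness-model}; the ``$\{w\}$'' in Definition~\ref{def::gsfc-witness} appears to be a typo. Write $X_n := \{a\}\cup\{b_i \mid i\in\omega\}\cup\{c_i \mid i<n\}$, so that the claim reads $\|(\Diamond\neg\neg\Box)^{n+1}P\|^\mgsf = X_n$. It is convenient to first record three general computation rules for $\mgsf$, each applying to an arbitrary $\preceq$-upset $Y$ and hence, by Lemma~\ref{lem::preservation}, to every truth set.

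\emph{Diamond.} By Lemma~\ref{lem::is-gsf-model} the model is forward confluent, so Lemma~\ref{lem::classical-valuations} gives the classical clause: $w\models\Diamond\psi$ iff some $\R$-successor of $w$ satisfies $\psi$. Since $c_i$ and $a$ have no $\R$-successors other than themselves, and every $b_i$ sees $a$, we get $\|\Diamond\psi\| = \|\psi\|\cup\{b_i \mid i\in\omega\}$ whenever $a\in\|\psi\|$.

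\emph{Double negation.} The only non-trivial $\preceq$-steps are $c_i\preceq b_i$, and all other worlds are $\preceq$-maximal. Hence for an upset $Y$ we have $\|\neg\neg\psi\| = Y\cup\{c_i \mid b_i\in Y\}$ where $Y=\|\psi\|$.

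\emph{Box.} The model is not downward confluent ($c_i\preceq b_i\R a$ has no matching square), so $\Box$ must be evaluated with the full clause: first step along $\preceq$, then along $\R$. Concretely, $a\models\Box\psi$ iff $a\models\psi$. Next, $b_i\models\Box\psi$ iff $a$, all $b_j$ with $j\le i$, and all $c_j$ with $j<i$ satisfy $\psi$. Finally, $c_i\models\Box\psi$ iff both $c_i\models\psi$ and $b_i\models\Box\psi$.

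\emph{Base case $n=0$.} From the rules, $\|\Box P\|=\{a\}$, because each $b_i$ and $c_i$ reaches a world other than $a$. Then $\|\neg\neg\Box P\|=\{a\}$, and $\|\Diamond\neg\neg\Box P\| = \{a\}\cup\{b_i \mid i\in\omega\} = X_0$.

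\emph{Inductive step.} Assume $\|(\Diamond\neg\neg\Box)^{n+1}P\| = X_n$. The box rule gives $\|\Box X_n\| = \{a\}\cup\{b_i \mid i\le n\}\cup\{c_i \mid i<n\}$: $b_i$ qualifies exactly when every $c_j$ with $j<i$ lies in $X_n$, i.e.\ when $i\le n$. The double-negation rule then adds $c_n$, giving $\{a\}\cup\{b_i,c_i \mid i\le n\}$. The diamond rule adds all $b_i$, giving $X_{n+1}$, as required.

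The only delicate point is the box computation. One must not use the simplified clause of Lemma~\ref{lem::classical-valuations}, since downward confluence fails; it is exactly the $\preceq$-step from $c_i$ to $b_i$ that prevents $c_n$ from entering $\|\Box X_n\|$. The $\neg\neg$ step then lets $c_n$ in, and this is what makes the sets grow by exactly one $c_i$ at each stage.
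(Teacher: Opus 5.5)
Your proof is correct and follows the same route as the paper's. Both argue by induction on $n$, using three facts: $b_n$ satisfies $\Box(\Diamond\neg\neg\Box)^{n+1}P$ but no $b_m$ with $m>n$ does; $\neg\neg$ then admits $c_n$ via $c_n\preceq b_n$; and each $c_m$ only $\R$-sees itself. The difference is only presentational: you compute each intermediate truth set exactly with your three rules (and correctly read $V(P)=\{a\}$), which spells out the ``straightforward computation'' the paper leaves to the reader.
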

\begin{proof}
    This proof is by induction on $n$.

    By definition, we have $\|P\|^\mgsf = \|\Box P\|^\mgsf = \|\neg\neg\Box P\|^\mgsf = \{a\}$.
    Therefore $\mgsf,w\models \Diamond\neg\neg\Box P$ iff $w = a$ or $w = b_i$ for some $i\in\omega$.

    Now, suppose $\|(\Diamond\neg\neg\Box)^{n+1} P\|^\mgsf = \{a\} \cup \{b_i\mid i\in\omega\} \cup \{ c_i \mid i<n \}$.
    If $w\in \{a\} \cup \{b_i\mid i\in\omega\} \cup \{ c_i \mid i<n \}$ the induction hypothesis along with a straightforward computation shows $\mgsf, w\models\Diamond\neg\neg\Box\varphi$.
    Now note that $\mgsf, b_{n}\models\Box(\Diamond\neg\neg\Box)^{n+1} P$, so $\mgsf, c_{n}\models\neg\neg\Box(\Diamond\neg\neg\Box)^{n+1} P$, and thus $\mgsf, c_{n}\models\Diamond\neg\neg\Box(\Diamond\neg\neg\Box)^{n+1} P$ as $\R$ is reflexive.
    Therefore $\{a\} \cup \{b_i\mid i\in\omega\} \cup \{ c_i \mid i<n \} \subseteq \|(\Diamond\neg\neg\Box)^{n+2} P\|^\mgsf$.
    Now, if $w = c_m$ with $m\geq n+1$, then $\mgsf, b_m\not\models\Box(\Diamond\neg\neg\Box)^{n+1} P$, and so $\mgsf, c_m \not\models \neg\neg\Box(\Diamond\neg\neg\Box)^{n+1} P$; as the only world $c_m$ accesses via $\R$ is itself, $\mgsf, c_m \not\models \Diamond\neg\neg\Box(\Diamond\neg\neg\Box)^{n+1} P$.
    We conclude $\|(\Diamond\neg\neg\Box)^{n+2} P\|^M = \{a\} \cup \{b_i\mid i\in\omega\} \cup \{ c_i \mid i<n+1 \}$.
\end{proof}

Lemma \ref{lem::gsf-witnesses-infinity} readily implies that $\{(\Diamond\neg\neg\Box)^{n+1} \in\{\neg,\Box,\Diamond\}^{<\omega} \mid n\in\omega\}$ is a set of pairwise non-equivalent modalities. Therefore:
\begin{theorem}
    \label{thm::gsf-has-inf-many-modalities}
    $\gsf$ has infinitely many $\{\neg,\Box,\Diamond\}$-modalities.
\end{theorem}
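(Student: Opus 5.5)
The theorem follows from Lemma~\ref{lem::gsf-witnesses-infinity} together with Lemma~\ref{lem::is-gsf-model}. The plan is to exhibit infinitely many pairwise non-$\gsf$-equivalent $\{\neg,\Box,\Diamond\}$-modalities, namely the family $\{(\Diamond\neg\neg\Box)^{n+1} \mid n\in\omega\}$. Suppose, towards a contradiction, that there were a finite set $\Sigma$ such that every $\{\neg,\Box,\Diamond\}$-modality is $\gsf$-equivalent to some member of $\Sigma$. Then by the pigeonhole principle two distinct members $(\Diamond\neg\neg\Box)^{n+1}$ and $(\Diamond\neg\neg\Box)^{m+1}$ with $n<m$ would be equivalent to the same $\sigma\in\Sigma$. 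Since $\Lambda$-equivalence is transitive, they would then be equivalent to each other. So it suffices to show pairwise non-equivalence.

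For this I will use the single model $\mgsf$. By Lemma~\ref{lem::is-gsf-model} it is a $\gsf$-model, so it is a legitimate test model in the definition of $\gsf$-equivalence. Take $\varphi := P$. Lemma~\ref{lem::gsf-witnesses-infinity} computes
\[
\|(\Diamond\neg\neg\Box)^{n+1}P\|^\mgsf = \{a\}\cup\{b_i \mid i\in\omega\}\cup\{c_i \mid i<n\}.
\]
For $n<m$ the world $c_n$ lies in $\|(\Diamond\neg\neg\Box)^{m+1}P\|^\mgsf$ but not in $\|(\Diamond\neg\neg\Box)^{n+1}P\|^\mgsf$, so the two truth sets differ. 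This establishes pairwise non-equivalence, and hence the theorem.

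The only real content sits in the two lemmas, which I take as given. The step I would double-check is the inductive computation of Lemma~\ref{lem::gsf-witnesses-infinity}, in particular that $c_n$ enters exactly at stage $n+2$. The argument runs as follows. Because $b_n$'s $\R$-successors $b_j$ ($j\le n$), $c_j$ ($j<n$) and $a$ all satisfy the previous stage, $\Box(\Diamond\neg\neg\Box)^{n+1}P$ holds at $b_n$. That world is the unique proper $\preceq$-successor of $c_n$, which yields $\neg\neg\Box(\cdots)$ at $c_n$. Conversely, for $m>n$ the successor $c_n$ of $b_m$ fails at the earlier stage, which blocks $c_m$.

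One could also note that, via Proposition~\ref{lem::transfer}, the same result immediately gives infinitely many $\{\neg,\Box,\Diamond\}$-modalities for $\isf$ and $\csf$, but that is not needed for the statement itself.
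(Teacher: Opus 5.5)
Your proposal is correct and follows the paper's argument. The paper likewise derives the theorem from Lemma~\ref{lem::gsf-witnesses-infinity} on the $\gsf$-model $\mgsf$ by observing that the modalities $(\Diamond\neg\neg\Box)^{n+1}$ are pairwise non-equivalent; your pigeonhole step and the separating world $c_n$ spell out what the paper calls ``readily implies,'' and your check of the induction (that $c_n$ first enters at stage $n+2$) matches the paper's computation.
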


%%%%%%%%%%%%%%%%%%%%%%%%%%%%%%%%%%%%%%%%%%%%%%%%%%%%%%%%%%%%%%%%%%%%%%%%%%%%%%%%
\section{The case for \texorpdfstring{$\gsfc$}{GS4c}}
%%%%%%%%%%%%%%%%%%%%%%%%%%%%%%%%%%%%%%%%%%%%%%%%%%%%%%%%%%%%%%%%%%%%%%%%%%%%%%%%
To prove that $\gsfc$ has finitely many modalities, we show that, under the right circumstances, boxes and diamonds are essentially dual. 

\begin{lemma}
    \label{lem::updownset-negation}
    Let $M = \tuple{W, W^\bot, \preceq, \R, V}$ be a $\gsf$-model.
    For all $\varphi$, $\|\neg\varphi\|^M$ is a $\preceq$-updownset.
\end{lemma}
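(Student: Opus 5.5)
The plan is to prove the two halves separately. The upset half is already covered by Lemma~\ref{lem::preservation}, applied to the formula $\neg\varphi$. The work is in the downset half, and the only special property of $\gsf$-models I expect to need is local linearity. Forward and backward confluence play no role, since $\neg\varphi$ only quantifies along $\preceq$.

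For the downset half, fix $w\in\|\neg\varphi\|^M$ and $v\preceq w$; the goal is $M,v\models\neg\varphi$. Unfolding the clause for $\to$, take an arbitrary $u$ with $v\preceq u$ and $M,u\models\varphi$; I must show $M,u\models\bot$. Since $v\preceq w$ and $v\preceq u$, local linearity gives two cases.
\begin{itemize}
  \item Case $w\preceq u$: the hypothesis $M,w\models\neg\varphi$ applies directly to $u$, so $M,u\models\bot$.
  \item Case $u\preceq w$: Lemma~\ref{lem::preservation} applied to $\varphi$ gives $M,w\models\varphi$. Applying $M,w\models\neg\varphi$ at $w$ itself (reflexivity of $\preceq$) then gives $M,w\models\bot$, i.e.\ $w\in W^\bot$.
\end{itemize}

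The second case is the main obstacle. From $w\in W^\bot$ and $u\preceq w$ I need $u\in W^\bot$, but the definition of a model only makes $W^\bot$ a $\preceq$-upset, not a $\preceq$-downset. If $W^\bot=\emptyset$, as in all the witness models built so far, the second case cannot occur and the argument is complete.

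For general models, I first intend to derive a contradiction from a fallible $w$ lying above a non-fallible $u$ that satisfies $\varphi$. If no such argument is available, the statement as worded needs an additional hypothesis, namely that $W^\bot$ is also a $\preceq$-downset (equivalently, that $W^\bot$ is a union of $\preceq$-connected components). Under that hypothesis the second case closes immediately. The reason such an argument might fail is the following configuration: $v=u\preceq w$ with $u\notin W^\bot$, $w\in W^\bot$, and $u$ the only point satisfying $\varphi$ below $w$. Here $w\models\neg\varphi$ holds vacuously while $v\not\models\neg\varphi$. So before committing, I would check whether the $\gsf$ frame conditions exclude this configuration. If they do not, I would state the lemma explicitly for $\gsf$-models in which $W^\bot$ is $\preceq$-downward closed.
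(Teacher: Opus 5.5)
Your argument is the paper's: Lemma~\ref{lem::preservation} gives the upset half, and local linearity splits the downset half into $w\preceq u$ (apply $M,w\models\neg\varphi$ at $u$) and $u\preceq w$ (push $\varphi$ up to $w$).

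Your worry about the second case is justified, and you should stop looking for a contradiction there. The paper closes that case with ``again a contradiction''. That is only a contradiction when $w\notin W^\bot$, and nothing in the stated frame conditions excludes $w\in W^\bot$. In fact, take $\varphi:=\neg\bot$. Since $\neg\bot$ holds everywhere, $\|\neg\neg\bot\|^M=W^\bot$, so the lemma itself asserts that $W^\bot$ is a $\preceq$-downset. Now consider the model with $W=\{u,w\}$, $u\preceq w$, $W^\bot=\{w\}$, $\R$ the identity and $V(P):=W$. It meets every requirement of a $\gsf$-model (even of a $\gsfc$-model), yet $W^\bot$ is not a $\preceq$-downset there. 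So your extra hypothesis is exactly the right one: your case analysis shows it is sufficient, and this instance shows it is necessary.

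The paper's proof tacitly assumes $W^\bot=\emptyset$. This is the usual setting for $\isf$-style birelational models. It is also needed anyway for soundness of $\ax{N_\Diamond}$: in a forward confluent model, a non-fallible world that $\R$-sees a fallible one refutes $\neg\Diamond\bot$. Under that assumption your proof is complete and coincides with the paper's.
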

\begin{proof}
    By Lemma \ref{lem::preservation}, $\|\neg\varphi\|^M$ is a $\preceq$-upset.
    Now suppose $w\in \|\neg\varphi\|^M$ and $v\preceq w$.
    If $M,v\models\neg\varphi$, we have nothing to do.
    Otherwise, there is $u\in W$ such that $v\preceq u $ and $M,u\models\varphi$.
    By the local linearity of $\preceq$, $w\preceq u$ or $u\preceq w$.
    In the first case, $M,w\models\neg\varphi$ implies $M,u\models\neg\varphi$, a contradiction.
    In the second case, $M,u\models\varphi$ implies $M,w\models\varphi$, again a contradiction.
    Therefore we must have $v\in\|\neg\varphi\|^M$.
\end{proof}

\begin{lemma}
    \label{lem::updownset-box-diamond}
    Let $M = \tuple{W, W^\bot, \preceq, \R, V}$ be a $\gsfc$-model.
    If $\|\varphi\|$ is a $\preceq$-updownset then $\|\Box\varphi\|$ and $\|\Diamond\varphi\|$ are also $\preceq$-updownsets.
\end{lemma}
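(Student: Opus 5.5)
By Lemma \ref{lem::preservation}, both $\|\Box\varphi\|$ and $\|\Diamond\varphi\|$ are already $\preceq$-upsets. So the only thing to prove is downward closure: if $v\preceq w$ and $w$ satisfies $\Box\varphi$ (resp.\ $\Diamond\varphi$), then so does $v$. Because a $\gsfc$-model is forward and downward confluent, Lemma \ref{lem::classical-valuations} lets me use the classical clauses: $w\models\Box\varphi$ iff every $\R$-successor of $w$ satisfies $\varphi$, and $w\models\Diamond\varphi$ iff some $\R$-successor of $w$ satisfies $\varphi$.

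\emph{Box case.} Assume $v\preceq w$, $w\models\Box\varphi$, and let $v\R u$. I will transport $u$ to a successor of $w$ using forward confluence. Applied to $v\R u$ and $v\preceq w$, it gives $u'$ with $w\R u'$ and $u\preceq u'$. Then $u'\models\varphi$. Since $\|\varphi\|$ is a downset and $u\preceq u'$, we get $u\models\varphi$. As $u$ was arbitrary, $v\models\Box\varphi$.

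\emph{Diamond case.} Assume $v\preceq w$ and $w\models\Diamond\varphi$, so there is $u$ with $w\R u$ and $u\models\varphi$. Here I will use downward confluence, applied to $v\preceq w\R u$. It yields $v'$ with $v\R v'$ and $v'\preceq u$. Since $\|\varphi\|$ is a downset, $v'\models\varphi$, and hence $v\models\Diamond\varphi$.

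The main point is choosing the right confluence condition in each case: forward confluence for $\Box$ and downward confluence for $\Diamond$. Each must be paired with the downset property of $\|\varphi\|$, not the upset property. Local linearity is not needed here. Downward confluence is also used implicitly, to justify the classical clause for $\Box$ via Lemma \ref{lem::classical-valuations}.
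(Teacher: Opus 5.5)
Your proof is correct and follows the same argument as the paper: forward confluence together with the downset property of $\|\varphi\|$ gives the $\Box$ case, and downward confluence together with the downset property gives the $\Diamond$ case. You are slightly more explicit than the paper in citing Lemma~\ref{lem::classical-valuations} to justify using the classical clauses; the paper relies on those clauses without saying so.
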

\begin{proof}
    By Lemma \ref{lem::preservation}, $\|\Box\varphi\|^M$ and $\|\Diamond\varphi\|^M$ are $\preceq$-upsets.

    Suppose that $w\in \|\Box\varphi\|^M$ and $v\preceq w$.
    Let $v'$ be such that $v \R v'$.
    By forward confluence, there is $w'$ such that $w \R w'$ and $v'\preceq w'$.
    As $M,w\models\Box\varphi$, we have $M,w'\models\varphi$.
    As $\|\varphi\|^M$ is a $\preceq$-updownset, $M,v'\models\varphi$ too.
    Therefore $M,v\models\Box\varphi$; that is, $v\in \|\Box\varphi\|^M$.
    
    Suppose that $w\in \|\Diamond\varphi\|^M$ and $v\preceq w$.
    There is $w'$ such that $w \R w'$ and $M,w'\models\varphi$.
    By downward confluence, there is $v'$ such that $v \R v'$ and $v'\preceq w'$.
    As $\|\varphi\|^M$ is a $\preceq$-updownset, $M,v'\models\varphi$ too.
    Therefore $M,v\models\Diamond\varphi$; that is, $v\in \|\Diamond\varphi\|^M$.
\end{proof}

\begin{lemma}
    \label{lem::flat-modalities}
    Let $M = \tuple{W, W^\bot, \preceq, \R, V}$ be a $\gsfc$-model.
    If $\|\varphi\|$ is a $\preceq$-updownset, then
    \[\text{
        $\|\neg\neg\varphi\|^M = \|\varphi\|^M$,
        $\|\Box\neg\varphi\|^M = \|\neg\Diamond\varphi\|^M$, and
        $\|\Diamond\neg\varphi\|^M = \|\neg\Box\varphi\|^M$.}  
    \]
\end{lemma}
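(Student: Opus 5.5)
The plan is to show that when $\|\varphi\|^M$ is a $\preceq$-updownset, negation behaves classically on it, and then to reduce the two modal identities to this fact together with the classical clauses for $\Box$ and $\Diamond$ from Lemma~\ref{lem::classical-valuations}. Since a $\gsfc$-model is forward and downward confluent, both clauses of that lemma apply: $M,w\models\Diamond\psi$ iff some $\R$-successor of $w$ satisfies $\psi$, and $M,w\models\Box\psi$ iff every $\R$-successor of $w$ satisfies $\psi$.

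\emph{Classical negation.} I first prove a claim: if $A=\|\varphi\|^M$ is a $\preceq$-updownset, then $\|\neg\varphi\|^M = W\setminus A$ up to fallible worlds. If $w\notin A$ and $w\preceq v$, then $v\notin A$, because $A$ is a downset; hence $w\models\neg\varphi$. If $w\in A$, then $w\models\varphi$, so $w\models\neg\varphi$ forces $w\models\bot$.

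Fallible worlds need care. Every formula holds at a point of $W^\bot$: this is an easy induction, using that $W^\bot$ is a $\preceq$-upset and a $\R$-upset and that $\R$ is reflexive for the $\Diamond$ case. So for non-fallible $w$ we get $w\models\neg\varphi$ iff $w\notin A$, and at fallible $w$ every formula in the statement holds anyway. In what follows I work with non-fallible worlds and treat fallible ones as trivially satisfying both sides.

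\emph{First identity.} Apply the claim twice, using Lemma~\ref{lem::updownset-negation} (or directly the claim) to see that $\|\neg\varphi\|^M$ is again an updownset. For non-fallible $w$ this gives $w\models\neg\neg\varphi$ iff $w\notin\|\neg\varphi\|$ iff $w\in A$.

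\emph{Second identity.} By Lemma~\ref{lem::updownset-box-diamond}, $\|\Diamond\varphi\|^M$ is an updownset. For non-fallible $w$:
\[
w\models\neg\Diamond\varphi \iff w\not\models\Diamond\varphi \iff \text{no } v \text{ with } w\R v \text{ has } v\models\varphi.
\]
On the other side, $w\models\Box\neg\varphi$ iff every $\R$-successor $v$ of $w$ satisfies $\neg\varphi$. For non-fallible $v$ this means $v\notin A$, and fallible $v$ satisfy everything. The two conditions agree as long as we check that a fallible successor $v$ of $w$ cannot witness $\Diamond\varphi$ at $w$. This is the delicate point.

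\emph{Third identity.} Symmetrically, $\|\Box\varphi\|^M$ is an updownset, so $w\models\neg\Box\varphi$ iff $w\not\models\Box\varphi$ iff some $\R$-successor of $w$ fails $\varphi$. This matches $w\models\Diamond\neg\varphi$, namely that some successor satisfies $\neg\varphi$, since such a successor is non-fallible exactly when it fails $\varphi$.

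The main obstacle is the handling of fallible worlds, because $W^\bot$ breaks exact complementation. My first attempt would be to exploit the fact that $\|\bot\|=W^\bot$ is a $\R$-upset. If the mismatch in the second identity, a fallible $\R$-successor satisfying $\varphi$, actually occurs, I would fall back on the following. Use $\ax{N_\Diamond}$, which holds in $\gsfc$ and hence, by soundness, in every $\gsfc$-model, to get $\|\Diamond\bot\|^M\subseteq W^\bot$. Then a non-fallible $w$ has no fallible $\R$-successor, since otherwise $w\models\Diamond\bot$ by the classical $\Diamond$ clause. With that, the case analyses above close.
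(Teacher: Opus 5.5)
The core of your argument matches the paper's. On a $\preceq$-updownset, $\neg$ acts as complement, and forward and downward confluence give the classical clauses for $\Diamond$ and $\Box$, after which each identity is a direct check. The paper's proof does this while silently treating every world as non-fallible. For instance, it passes from $M,w\models\neg\Diamond\varphi$ to $M,w\not\models\Diamond\varphi$, and from ``every $\R$-successor satisfies $\neg\varphi$'' to $M,w'\not\models\Diamond\varphi$.

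Your explicit treatment of $W^\bot$ is justified, and your ``first attempt'' (using only that $W^\bot$ is an $\R$-upset) cannot succeed. Take $W=\{w,u\}$, $W^\bot=\{u\}$, $\preceq$ the identity, $w\R u$ and $V(P)=\{u\}$. Every clause of the paper's definition of a $\gsfc$-model holds, and $\|P\|^M$ is trivially a $\preceq$-updownset. Yet $w\in\|\Box\neg P\|^M\setminus\|\neg\Diamond P\|^M$, and with $V(P)=W$ instead, $w\in\|\Diamond\neg P\|^M\setminus\|\neg\Box P\|^M$. So your fallback via $\ax{N_\Diamond}$ is the essential step, not a backup.

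Note, though, that $\ax{N_\Diamond}$ fails at $w$ in this same model. The soundness theorem you cite therefore holds only under the convention the paper tacitly adopts: no non-fallible world has a fallible $\R$-successor (in practice, $W^\bot=\emptyset$ for $\isf$-style models). Once that convention is stated, your fallible-world bookkeeping becomes vacuous and your proof and the paper's coincide.

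One justification should be repaired. For the first identity you invoke Lemma~\ref{lem::updownset-negation} (or your claim) to make $\|\neg\varphi\|^M$ an updownset. With fallible worlds this fails even in models validating $\ax{N_\Diamond}$. Take $W=\{v,w\}$, $W^\bot=\{w\}$, $v\preceq w$, $\R$ the identity and $V(P)=W$; then $\|\neg P\|^M=\{w\}$ is not a $\preceq$-downset.

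You do not need this step. For non-fallible $w$, if $w\in\|\varphi\|^M$ then $w\models\neg\neg\varphi$ intuitionistically. If $w\notin\|\varphi\|^M$ then $w\models\neg\varphi$ by your claim, and since $w\not\models\bot$ this gives $w\not\models\neg\neg\varphi$.
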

\begin{proof}
    Fix a $\gsfc$-model $M = \tuple{W, W^\bot, \preceq, \R, V}$, a world $w\in W$, and a formula $\varphi$ such that $\|\varphi\|^M$ is a $\preceq$-updownset.

    If $M,w\models\varphi$, then $M,w\models \neg\neg\varphi$ readily follows.
    Suppose that $M,w\models \neg\neg\varphi$.
    So there is $v$ such that $w\preceq v$ and $M,v\models \varphi$.
    As $\|\varphi\|^M$ is a $\preceq$-updownset, $M,w\models\varphi$ too.
    We conclude $\|\neg\neg\varphi\|^M = \|\varphi\|^M$.

    Suppose that $M,w\models \Box\neg\varphi$.
    Let $w'$ be such that $w\preceq w'$.
    If $v'$ is such that $w' \R v'$, then $M,v'\models\neg\varphi$.
    Therefore $M,w'\not\models\Diamond\varphi$, and so $M,w\models\neg\Diamond\varphi$.
    Now, suppose $M,w\models\neg\Diamond\varphi$.
    Let $v$ be such that $w\R v$.
    Then $M,w\not\models\Diamond\varphi$, and so $M, v\not\models \varphi$.
    As $\|\varphi\|^M$ is a $\preceq$-updownset, $M,v\models\neg\varphi$.
    Therefore $M,w\models\Box\neg\varphi$.
    We conclude $\|\Box\neg\varphi\|^M = \|\neg\Diamond\varphi\|^M$.

    Suppose that $M,w\models \Diamond\neg\varphi$.
    Let $w'\in W$ be such that $w\preceq w'$.
    Then there is $v'\in W$ such that $w'\R v'$ and $M,v'\models\neg\varphi$.
    Therefore $M, w'\not\models\Box\varphi$.
    And so $M,w\models\neg\Box\varphi$.
    Now suppose $M,w\models\neg\Box\varphi$.
    As $M,w\not\models\Box\varphi$, there is $v\in W$ such that $w\R v$ and $M,v\not\models\varphi$.
    As $\|\varphi\|^M$ is a $\preceq$-updownset, $M,v\models\neg\varphi$.
    So $M,w\models\Diamond\neg\varphi$.
    We conclude $\|\Diamond\neg\varphi\|^M = \|\neg\Box\varphi\|^M$.
\end{proof}

Finally, we can show:
\begin{theorem}
    \label{thm::gsfc-has-fin-many-modalities}
    $\gsfc$ has finitely many $\{\neg,\Box,\Diamond\}$-modalities.
\end{theorem}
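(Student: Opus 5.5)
We would prove the theorem by showing that every $\{\neg,\Box,\Diamond\}$-modality is $\gsfc$-equivalent to one of boundedly many ``normal forms''. The key observation is that a modality containing at least one negation becomes ``flat'' after its innermost negation. Write an arbitrary modality as $\sigma = \tau\neg\rho$, where $\rho\in\{\Box,\Diamond\}^{<\omega}$ and the displayed $\neg$ is the innermost (rightmost) negation; if there is no negation, $\sigma\in\{\Box,\Diamond\}^{<\omega}$, and Lemma~\ref{lem::reductions-csf} (items on $\Box\Box$, $\Diamond\Diamond$, $\Box\Diamond\Box\Diamond$, $\Diamond\Box\Diamond\Box$) together with Proposition~\ref{lem::transfer} reduces $\rho$ to one of the seven classical $\{\Box,\Diamond\}$-modalities.

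For the case with negation, first reduce $\rho$ to one of the seven $\{\Box,\Diamond\}$-modalities as above; this is a syntactic $\csf$-equivalence, so it holds in every $\gsfc$-model. Then fix a $\gsfc$-model $M$ and a formula $\varphi$. By Lemma~\ref{lem::updownset-negation}, $\|\neg\rho\varphi\|^M$ is a $\preceq$-updownset. By Lemma~\ref{lem::updownset-box-diamond} and Lemma~\ref{lem::updownset-negation}, every formula $\tau'\neg\rho\varphi$ with $\tau'$ a suffix of $\tau$ also has an updownset extension. Hence Lemma~\ref{lem::flat-modalities} applies at every position inside $\tau$. We can delete double negations $\neg\neg$, and we can push every remaining negation in $\tau$ to the right through boxes and diamonds, swapping $\Box\leftrightarrow\Diamond$ via $\|\neg\Diamond\psi\| = \|\Box\neg\psi\|$ and $\|\neg\Box\psi\|=\|\Diamond\neg\psi\|$, where $\psi$ is flat. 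Each such rewrite is justified pointwise in $M$ because the argument $\psi$ is flat. Therefore $\tau\neg\rho$ is equivalent in $M$ to $\tau^*\neg^k\neg\rho$ with $\tau^*\in\{\Box,\Diamond\}^{<\omega}$, and then to $\tau^*\neg\rho$ or $\tau^*\neg\neg\rho$ using $\neg\neg\psi\equiv\psi$ for flat $\psi$. Note that $\neg\neg\rho\varphi$ is itself flat, since it is a negation.

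Next we bound $\tau^*$. The reductions $\Box\Box\equiv\Box$, $\Diamond\Diamond\equiv\Diamond$, $\Box\Diamond\Box\Diamond\equiv\Box\Diamond$ and $\Diamond\Box\Diamond\Box\equiv\Diamond\Box$ are $\csf$-theorems in the form $\vdash\chi\leftrightarrow\chi'$ for arbitrary arguments. Applying them with argument $\neg\rho\varphi$ or $\neg\neg\rho\varphi$ reduces $\tau^*$ to one of the seven classical modalities. Every modality is therefore equivalent, uniformly over all $M$ and $\varphi$, to a member of the finite set
\[
\Sigma=\{\alpha,\ \alpha\neg\beta,\ \alpha\neg\neg\beta \mid \alpha,\beta\in\{\varepsilon,\Box,\Diamond,\Box\Diamond,\Diamond\Box,\Box\Diamond\Box,\Diamond\Box\Diamond\}\},
\]
which has at most $147$ elements. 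This establishes the theorem.

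The main obstacle is making sure the rewriting is legitimate: $\Lambda$-equivalence is quantified over all models and formulas, but Lemma~\ref{lem::flat-modalities} only gives equalities of extensions under a flatness hypothesis. We handle this by proving, by induction on the length of $\tau$, the uniform claim that for every $M$ and $\varphi$, $\|\tau\neg\rho\varphi\|^M = \|\tau^{\dagger}\varphi\|^M$ for the normal form $\tau^{\dagger}$ computed purely syntactically from $\tau$ and $\rho$. At each step we apply Lemma~\ref{lem::flat-modalities} to the already-flat inner formula, and we use that extensions of $\Box\psi,\Diamond\psi,\neg\psi$ depend only on $\|\psi\|^M$. That last fact is a trivial congruence property of the semantics, and it lets us substitute equal extensions under outer operators.
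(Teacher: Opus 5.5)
Your proposal is correct and is essentially the paper's own argument: split off the innermost negation, use Lemmas~\ref{lem::updownset-negation} and~\ref{lem::updownset-box-diamond} to see that every formula built on top of $\neg\rho\varphi$ is a $\preceq$-updownset, and then use Lemma~\ref{lem::flat-modalities} to make the outer part reduce classically. The only difference is that you spell out the rewriting explicitly, pushing negations inward to get normal forms $\alpha\neg\neg\beta$ where the paper has $\neg\alpha\neg\beta$, rather than appealing to the $14$ classical $\SF$ modalities; incidentally, your $\Sigma$ has exactly $7+49+49=105$ elements, matching the paper's bound, not $147$.
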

\begin{proof}
    Let $M = \tuple{W, W^\bot, \preceq, \R, V}$ be a $\gsfc$-model.
    By Lemma \ref{lem::reductions-csf}, there are only $7$ $\{\Box,\Diamond\}$-modalities: $\varepsilon$, $\Box$, $\Diamond$, $\Box\Diamond$, $\Diamond\Box$, $\Box\Diamond\Box$, and $\Diamond\Box\Diamond$.
    Let $\sigma$ be a $\{\neg,\Box,\Diamond\}$-modality where $\neg$ occurs.
    Write $\sigma = \sigma_l \neg\sigma_r$ where no $\neg$ occurs in $\sigma_r$.
    By repeated applications of Lemma \ref{lem::flat-modalities}, the modality $\sigma_l\neg$ is equivalent to $\sigma'_l\neg$ where $\sigma'_l$ is one of the $14$ $\{\neg,\Box,\Diamond\}$-modalities of classical $\SF$.
    Therefore $\gsfc$ has at most $7 + 14 \cdot 7 = 105$ $\{\neg,\Box,\Diamond\}$-modalities.
\end{proof}

% \Acknowledgements{Acknowledgements such as thanks for reviewers' remarks can be put here.}

%%%%%%%%%%%%%%%%%%%%%%%%%%%%%%%%%%%%%%%%%%%%%%%%%%%%%%%%%%%%%%%%%%%%%%%%%%%%%%%%
\section{Conclusion}
%%%%%%%%%%%%%%%%%%%%%%%%%%%%%%%%%%%%%%%%%%%%%%%%%%%%%%%%%%%%%%%%%%%%%%%%%%%%%%%%
While in classical $\SF$ there are finitely many $\{\neg,\Box,\Diamond\}$-modalities, we prove that the situation is rather more complicated when we consider variations of $\SF$ based on intuitionistic propositional logic.
We close this paper with two directions for future work.

A first direction is to study the analogous problem for other non-classical modal logics.
This has been done in the classical setting, see, for example, Section~4.4 of Chellas~\cite{chellas1980textbook} which proves that the logics $\mathsf{KT4}$, $\mathsf{K5}$, $\mathsf{KD5}$, $\mathsf{K45}$, $\mathsf{KB4}$, $\mathsf{KD45}$, and $\mathsf{KT5}$ have finitely many $\{\neg,\Box,\Diamond\}$-modalities.
The main roadblock here would be the semantics for logics in the \emph{constructive} modal cube; for most of these logics, only proof systems have appeared in the literature~\cite{arisaka2015nestedsequents} and some of these coincide with their intuitionistic versions~\cite{pacheco2024ckb}.

A second direction is to study variations of $\SF$ and the logics mentioned above which are based on non-classical logics other than the intuitionistic propositional calculus.
For a short introduction to some of these variations, see~\cite{shawn2027nonclassicalmodallogic}.

\printbibliography

@inproceedings{alechina2001cs4,
	author    = {Natasha Alechina and
		Michael Mendler and
			Valeria {de Paiva} and
			Eike Ritter},
	editor    = {Laurent Fribourg},
	title     = {Categorical and {Kripke} Semantics for Constructive {S4} Modal Logic},
	booktitle = {Computer Science Logic, 15th International Workshop, {CSL} 2001. 10th
		Annual Conference of the EACSL, Paris, France, September 10-13, 2001,
		Proceedings},
	series    = {Lecture Notes in Computer Science},
	volume    = {2142},
	pages     = {292--307},
	publisher = {Springer},
	year      = {2001},
    doi       = {10.1007/3-540-44802-0_21},
}

@article{dosen1985stronger,
 author = {Do{\v s}en, Kosta},
 title = {Models for stronger normal intuitionistic modal logics},
 fjournal = {Studia Logica},
 journal = {Stud. Log.},
 issn = {0039-3215},
 volume = {44},
 pages = {39--70},
 year = {1985},
 doi = {10.1007/BF00370809},
 zbMATH = {4031639},
 Zbl = {0634.03015}
}

@article{font1986modality,
 author = {Font, Josep M.},
 title = {Modality and possibility in some intuitionistic modal logics},
 fjournal = {Notre Dame Journal of Formal Logic},
 journal = {Notre Dame J. Formal Logic},
 issn = {0029-4527},
 volume = {27},
 pages = {533--546},
 year = {1986},
 doi = {10.1305/ndjfl/1093636766},
 zbMATH = {4039855},
 Zbl = {0638.03017}
}

@article {parry1939modalities,
    AUTHOR = {Parry, William Tuthill},
     TITLE = {Modalities in the {\it {S}urvey} system of strict implication},
   JOURNAL = {J. Symbolic Logic},
  FJOURNAL = {The Journal of Symbolic Logic},
    VOLUME = {4},
      YEAR = {1939},
     PAGES = {137--154},
      ISSN = {0022-4812,1943-5886},
   MRCLASS = {02.0X},
  MRNUMBER = {809},
MRREVIEWER = {O.\ Frink},
       DOI = {10.2307/2268714},
}

@article{kuratowski1922closure,
 author = {Kuratowski, C.},
 title = {Sur l'op{\'e}ration {{\(A\)}} de l'analysis situs.},
 fjournal = {Fundamenta Mathematicae},
 journal = {Fundam. Math.},
 issn = {0016-2736},
 volume = {3},
 pages = {182--199},
 year = {1922},
 doi = {10.4064/fm-3-1-182-199},
 zbMATH = {2600910},
 JFM = {48.0210.04}
}

@article {gardner2008kuratowski,
    AUTHOR = {Gardner, B. J. and Jackson, M.},
     TITLE = {The {K}uratowski closure-complement theorem},
   JOURNAL = {New Zealand J. Math.},
  FJOURNAL = {New Zealand Journal of Mathematics},
    VOLUME = {38},
      YEAR = {2008},
     PAGES = {9--44},
      ISSN = {1171-6096,1179-4984},
   MRCLASS = {54A05},
  MRNUMBER = {2491682},
MRREVIEWER = {Ivan\ L.\ Reilly},
}

@article {ciraulo2025constructivekuratowski,
    AUTHOR = {Ciraulo, Francesco},
     TITLE = {Kuratowski's problem in constructive topology},
   JOURNAL = {J. Log. Anal.},
  FJOURNAL = {Journal of Logic and Analysis},
    VOLUME = {17},
      YEAR = {2025},
      number = {FDS1},
     PAGES = {27},
      ISSN = {1759-9008},
   MRCLASS = {54A05 (03F65 06A15 06D22)},
  MRNUMBER = {4869930},
       DOI = {10.4115/jla.2025.17.fds1},
}

@article {bellissima1989modalities,
    AUTHOR = {Bellissima, Fabio and Mirolli, Massimo},
     TITLE = {A general treatment of equivalent modalities},
   JOURNAL = {J. Symbolic Logic},
  FJOURNAL = {The Journal of Symbolic Logic},
    VOLUME = {54},
      YEAR = {1989},
    NUMBER = {4},
     PAGES = {1460--1471},
      ISSN = {0022-4812,1943-5886},
   MRCLASS = {03B45},
  MRNUMBER = {1026610},
MRREVIEWER = {Sergei\ N.\ Artemov},
       DOI = {10.2307/2274826},
}

@article {vanbenthem1976reduction,
    AUTHOR = {van Benthem, J. F. A. K.},
     TITLE = {Modal reduction principles},
   JOURNAL = {J. Symbolic Logic},
  FJOURNAL = {The Journal of Symbolic Logic},
    VOLUME = {41},
      YEAR = {1976},
    NUMBER = {2},
     PAGES = {301--312},
      ISSN = {0022-4812,1943-5886},
   MRCLASS = {02C10},
  MRNUMBER = {409111},
MRREVIEWER = {David\ Makinson},
       DOI = {10.2307/2272228},
}

@article {fitch1973reduction,
    AUTHOR = {Fitch, Frederic B.},
     TITLE = {A correlation between modal reduction principles and
              properties of relations},
   JOURNAL = {J. Philos. Logic},
  FJOURNAL = {Journal of Philosophical Logic},
    VOLUME = {2},
      YEAR = {1973},
    NUMBER = {1},
     PAGES = {97--101},
      ISSN = {0022-3611},
   MRCLASS = {02C10},
  MRNUMBER = {414319},
MRREVIEWER = {Curt\ Christian},
       DOI = {10.1007/BF02115611},
}

@article{mckinsey1944topology,
	author={{J.C.C.} McKinsey and A. Tarski},
	title={The algebra of topology},
	journal={Annals of Mathematics},
	series={2},
	volume={45},
    number={1},
	year={1944},
	pages={141--191},
    doi={10.2307/1969080},
}

@book {chellas1980textbook,
    AUTHOR = {Chellas, Brian F.},
     TITLE = {Modal logic},
      NOTE = {An introduction},
 PUBLISHER = {Cambridge University Press, Cambridge-New York},
      YEAR = {1980},
     PAGES = {xii+295},
      ISBN = {0-521-22476-4},
   MRCLASS = {03B45 (03-01)},
  MRNUMBER = {556867},
MRREVIEWER = {Robert\ P.\ McArthur},
}

@article{mendler2005constructive,
	title={Constructive {CK} for contexts},
	author={Mendler, Michael and {de Paiva}, Valeria},
	journal={Context Representation and Reasoning (CRR-2005)},
	volume={13},
	year={2005},
	publisher={CEUR Proceedings, Paris}
}

@article{wijesekera1990constructive,
	title={Constructive modal logics {I}},
	author={Wijesekera, Duminda},
	journal={Annals of Pure and Applied Logic},
	volume={50},
	number={3},
	pages={271--301},
	year={1990},
	publisher={Elsevier},
	doi = {10.1016/0168-0072(90)90059-B}
}

@phdthesis{simpson1994phdthesis,
	author    = {A. Simpson},
	title     = {The proof theory and semantics of intuitionistic modal logic},
	school    = {University of Edinburgh, {UK}},
	year      = {1994},
}

@article{balbiani2024variants,
  title={Constructive S4 modal logics with the finite birelational frame property},
  author={Balbiani, Philippe and Di{\'e}guez, Mart{\'\i}n and Fern{\'a}ndez-Duque, David and McLean, Brett},
  year={2024},
  url={https://arxiv.org/abs/2403.00201}
}

@inproceedings{balbiani2021variants,
  title={Some constructive variants of S4 with the finite model property},
  author={Balbiani, Philippe and Di{\'e}guez, Mart{\'\i}n and Fern{\'a}ndez-Duque, David},
  booktitle={2021 36th Annual ACM/IEEE Symposium on Logic in Computer Science (LICS)},
  pages={1--13},
  year={2021},
  organization={IEEE},
  doi={10.1109/LICS52264.2021.9470643},
}

@article{aguilera2026polytopological,
  title={Polytopological Semantics for Intuitionistic Modal Logics},
  author={Aguilera, Juan P and Fern{\'a}ndez-Duque, David and Pacheco, Leonardo},
  url={https://arxiv.org/abs/2604.23234},
  year={2026}
}

@inproceedings{girlando2023is4,
  author       = {Marianna Girlando and
                  Roman Kuznets and
                  Sonia Marin and
                  Marianela Morales and
                  Lutz Stra{\ss}burger},
  title        = {Intuitionistic {S4} is decidable},
  booktitle    = {38th Annual {ACM/IEEE} Symposium on Logic in Computer Science, {LICS}
                  2023, Boston, MA, USA, June 26-29, 2023},
  pages        = {1--13},
  publisher    = {{IEEE}},
  year         = {2023},
  doi          = {10.1109/LICS56636.2023.10175684},
  bibsource    = {dblp computer science bibliography, https://dblp.org}
}

@inproceedings{girlando2026is4mistake,
  author       = {Marianna Girlando and
                  Roman Kuznets and
                  Sonia Marin and
                  Marianela Morales and
                  Lutz Stra{\ss}burger},
  title        = {An Unfinished Story: Decidability of Intuitionistic S4},
  booktitle    = {8th Intuitionistic Modal Logic and Applications Workshop},
  year         = {2026},
  url          = {https://sonia-marin.github.io/imla26/imla-finals/imla-final15.pdf},
}

@article {caicedo2015bimodalgodellogic,
    AUTHOR = {Caicedo, Xavier and Rodr\'iguez, Ricardo Oscar},
     TITLE = {Bi-modal {G}\"odel logic over {$[0,1]$}-valued {K}ripke
              frames},
   JOURNAL = {J. Logic Comput.},
  FJOURNAL = {Journal of Logic and Computation},
    VOLUME = {25},
      YEAR = {2015},
    NUMBER = {1},
     PAGES = {37--55},
      ISSN = {0955-792X,1465-363X},
   MRCLASS = {03B45 (03B50)},
  MRNUMBER = {3365494},
MRREVIEWER = {Pedro\ Cabalar},
       DOI = {10.1093/logcom/exs036},
}

@article {rodriguez2021crispgodellogic,
    AUTHOR = {Rodríguez, Ricardo Oscar and Vidal, Amanda},
     TITLE = {Axiomatization of crisp {G}\"odel modal logic},
   JOURNAL = {Studia Logica},
  FJOURNAL = {Studia Logica. An International Journal for Symbolic Logic},
    VOLUME = {109},
      YEAR = {2021},
    NUMBER = {2},
     PAGES = {367--395},
      ISSN = {0039-3215,1572-8730},
   MRCLASS = {03B50 (03B45)},
  MRNUMBER = {4218797},
MRREVIEWER = {Giacomo\ Lenzi},
       DOI = {10.1007/s11225-020-09910-5},
}

@article{arisaka2015nestedsequents,
	author    = {Ryuta Arisaka and
		Anupam Das and
			Lutz Stra{\ss}burger},
	title     = {On Nested Sequents for Constructive Modal Logics},
	journal   = {Logical Methods in Computer Science},
	volume    = {11},
	number    = {3},
	year      = {2015},
	doi       = {10.2168/LMCS-11(3:7)2015},
}

@article{pacheco2024ckb,
  title={Collapsing Constructive and Intuitionistic Modal Logics},
  author={Pacheco, Leonardo},
  url={https://arxiv.org/abs/2408.16428},
  year={2024}
}

@incollection{shawn2027nonclassicalmodallogic,
  author    = "Søren Knudstorp and Igor Sedlar and Shawn Standefer",
  title     = "Non-Classical Logics",
  booktitle = "Modal Logic Today: A Handbook",
  year      = "to appear"
}

@article{servi1984axiomatizations,
	title={Axiomatizations for some intuitionistic modal logics},
	author={Gis{\`e}le {Fischer Servi}},
	journal={Rend. Sem. Mat. Univers. Politecn. Torino},
	volume={42},
	number={3},
	pages={179--194},
	year={1984}
}

\end{document}